\newcommand{\note}[1]{{\color{red}[1]}}
\DeclareMathOperator*{\argmin}{arg\,min}
  \def\x{\bm{x}}
  \def\y{\bm{y}}
  \def\R{\mathbb{R}}
  \def\1{{\bf{1}}}
  \def\s{\bm{s}}
  \def\Z{\mathcal{Z}}
  \def\K{\mathcal{K}}
  \def\c{\bm{c}}
  \def\b{\bm{b}}
  \def\btheta{\bm{\theta}}
  \def\blambda{\bm{\lambda}}
  \def\z{\bm{z}}
  \def\bgamma{\bm{\gamma}}
  \def\0{\bm{0}}
\theoremstyle{plain}
\newtheorem{theorem}{Theorem}[section]
\newtheorem{proposition}[theorem]{Proposition}
\newtheorem{lemma}[theorem]{Lemma}
\newtheorem{definition}[theorem]{Definition}
\newtheorem{remark}[theorem]{Remark}
\newtheorem{example}[theorem]{Example}
\newtheorem{observation}[theorem]{Observation}
\icmltitlerunning{Inverse Optimization via Learning Feasible Regions}
\begin{document}

\twocolumn[
\icmltitle{Inverse Optimization via Learning Feasible Regions}




\begin{icmlauthorlist}
\icmlauthor{Ke Ren}{a}
\icmlauthor{Peyman Mohajerin Esfahani}{b}
\icmlauthor{Angelos Georghiou}{c}
\end{icmlauthorlist}

\icmlaffiliation{a}{Amazon}
\icmlaffiliation{b}{University of Toronto and Delft University of Technology}
\icmlaffiliation{c}{University of Cyprus}

\icmlcorrespondingauthor{Ke Ren}{renkea@amazon.com }

\vskip 0.3in
]



\printAffiliationsAndNotice{}  

\begin{abstract}
We study inverse optimization (IO) where the goal is to use a parametric optimization program as the hypothesis class to infer relationships between input-decision pairs. Most of the literature focuses on learning only the objective function, as learning the constraint function (i.e., feasible regions) leads to nonconvex training programs. Motivated by this, we focus on learning feasible regions for known linear objectives and introduce two training losses along with a hypothesis class to parameterize the constraint function. Our hypothesis class surpasses the previous objective-only method by naturally capturing discontinuous behaviors in input-decision pairs. We introduce a customized block coordinate descent algorithm with a smoothing technique to solve the training problems, while for 
further restricted hypothesis classes, we reformulate the training optimization as a tractable convex program or mixed integer linear program. Synthetic experiments and two power system applications, including comparisons with state-of-the-art approaches, showcase and validate the proposed approach.
\end{abstract}

\section{Introduction}
\label{submission}

Inverse optimization (IO) reverses traditional optimization. While classical optimization finds optimal decisions based on predefined objectives and constraints, IO takes decisions as inputs and identifies the objective and/or constraints that make these decisions either approximately or precisely optimal. Interest in IO has surged in recent years, leading to advancements in learning theory~\cite{aswani2018inverse, mohajerin2018data, chan2020inverse} and recently in reinforcement learning~\cite{ref:Amazon}, as well as applications such as transportation~\cite{zhang2017data, chen2021inverse}, system control~\cite{akhtar2021learning}, robotics~\cite{ref:offlineRL}, healthcare \cite{ayer2015inverse, ajayi2022objective}, and finance \cite{li2021inverse}. More recently, the IO framework has also been successfully applied to the Amazon Last Mile Routing Research Challenge, where the goal is to learn and replicate human drivers' routing preferences~\cite{ref:Amazon}. 

The IO model builds on a parametric \emph{forward} optimization model that represents the decision-generating process. Given an input signal $\bm s\in\mathbb{R}^k$, the following optimization problem defined through $f,g:\mathbb{R}^k\times\mathbb{R}^n\rightarrow \mathbb{R}$, generates an optimal solution $\bm x\in\mathbb{R}^n$ which is observed, however, it is possible that the observation is contaminated by noise. 
\begin{equation}\label{forward_problem}
    \begin{array} {cl}
        \displaystyle\min_{\bm x\in\mathbb{R}^n}
        & f(\bm x,\bm s) \qquad {\rm s.t.} \quad  g(\bm x,\bm s) \leq 0
    \end{array}
\end{equation}
In the most general case, decision makers have no knowledge of $f$ and $g$  but have access to $N$ independent pairs of input/decisions $\{\s_i,\x_i\}_{i = 1}^N$ from problem \eqref{forward_problem}.  As the space of all possible objective and constraint functions is vast, the decision-maker seeks to approximate $f$ and $g$ by candidates from some parametric hypothesis spaces $f_{\btheta}$ and $g_{\btheta}$, $\btheta \in \Theta$, where $\Theta$ represents a finite-dimensional parameter set. Correspondingly, in IO, decision-makers aim to learn $\btheta$ such that for each signal $\s_n$, $\x_n$ is optimal (or approximately optimal) in the following problem:
\begin{equation}\label{recovered_p}
   \begin{array}{cl}
        \displaystyle\min_{\bm x \in\mathbb{R}^n}
        & f_{\bm \theta}(\bm x,\bm s) \qquad {\rm s.t.} \quad  g_{\bm \theta}(\bm x,\bm s) \leq 0.
    \end{array}
\end{equation}
Through a supervised learning lens, the parameterized model~\eqref{recovered_p} can be viewed as a hypothesis class for learning the mapping between the input~$\bm s$ and the output decision~$\bm x$ generated by the forward optimization~\eqref{forward_problem}. 
 
\subsection{Contributions}
This paper studies the learning problem of IO where, unlike the majority of the literature that focuses on a parametric form for the objective function~$f_{\bm \theta}$, the main objective is to learn the constraints function~$g_{\bm \theta}$. In this context, the paper has three main contributions:  

    \noindent\textbf{Loss functions compatible with IO data:} Generalizing existing literature, we introduce two loss functions $\ell_{\bm \theta}(\bm x,\bm s)$ to evaluate model fit \eqref{recovered_p} with parameter $\bm \theta$ on the input-output data pair $(\bm x, \bm s)$ from \eqref{forward_problem} (Proposition \ref{prop:full}). Regarding performance on unseen (test) data, we argue that the IO setting encompasses a ``true" counterpart of these losses, unlike general supervised learning problems; see Figure~\ref{fig:Losses2} (bottom) for their geometric representation.
    
    \noindent \textbf{Hypothesis Classes with Convex and MILP Reformulations:} 
    We propose a new hypothesis class to parameterize the constraint function $ g_{\bm{\theta}}$, which is significantly richer than existing IO approaches (Example~\ref{example1}). However, it results in non-convex training problems. We demonstrate that special cases of this hypothesis class allow the optimization of the training loss $ \ell_{\bm{\theta}} $ to be exactly reformulated into tractable convex optimization (Theorem \ref{thm:convex}) and mixed-integer linear programs (MILP) (Proposition \ref{prop:milp}), which can be solved efficiently with off-the-shelf solvers.

    \noindent\textbf{Smoothed block-coordinate descent algorithm:} 
    We further exploit the structure of the proposed losses and devise a tailored block coordinate descent algorithm together with a smoothing technique to train our IO setting in for the generic hypothesis class~(Algorithm~\ref{Alg:smooth}). The smoothing technique provides an interesting insight into the relation between the two proposed loss functions, justifying why one often presents better computation results when optimized using vanilla gradient descent (Algorithm~\ref{alg:vanilla}); see Remark~\ref{rem:smoothness}.

The paper is organized as follows: Section~\ref{sec:loss} introduces two losses and explores their properties. Section~\ref{sec:hypothesis} presents the hypothesis class together with the proposed  coordinate descent algorithm with a smoothing technique to solve the general problem, along with convex and MILP reformulations for specific cases. Section~\ref{sec:real} covers computational studies. Limitations and future work are discussed in Section~\ref{sec:conclusion}. Proofs and additional computational studies are in the appendix.

\subsection{Related Works}
Extensive studies have been carried out to estimate the objective functions in data-driven IO. The various approaches primarily diverge in terms of the loss functions utilized to capture the disparity between predictions and observations. These include the KKT loss \cite{keshavarz2011imputing}, first-order loss \cite{bertsimas2015data}, predictability loss \cite{aswani2018inverse}, and suboptimality loss \cite{mohajerin2018data}. The properties and relationships of these approaches are summarized \cite{mohajerin2018data}. Other methodologies are also proposed under different settings including online settings \cite{barmann2017emulating, dong2018generalized}. In the landscape of IO for feasible regions, the majority of existing works have traditionally concentrated on right-hand side parameters \cite{dempe2006inverse, guler2010capacity, xu2016data, saez2016data, lu2018data}. However, recent endeavors by \cite{aswani2018inverse,tan2020learning,ghobadi2020inferring} have expanded the scope by considering various aspects of constraint parameters.

The work \cite{aswani2018inverse}  incorporates general constraint parameters into their model, departing from the mainstream.  However, their model encounters intractability issues, relying on enumerations to evaluate the problem. Similarly, \cite{tan2020learning} faces tractability challenges, employing sequential quadratic programming within a bi-level optimization framework for solving constraint parameters in linear programming. While \cite{chan2020inverse} and \cite{ghobadi2020inferring} propose tractable solutions, their settings impose some restrictions: \cite{chan2020inverse} does not consider a data-driven setting and confines their analysis to a single observation in linear programming. The paper~\cite{ghobadi2020inferring} extends this limitation to multi-point scenarios, but only for the linear programming case where the multiple points are merely feasible solutions, not necessarily optimal.

\section{Learning Frameworks}\label{sec:loss}

\textbf{Inverse optimization as supervised learning.}
From a machine learning perspective, the IO problem can be categorized as a supervised learning problem, where $\s$ represents the independent variable and $\x$ represents the response. Therefore, it is natural to define a loss minimization procedure (as shown in (\ref{saa_loss})) to find the unknown parameters $\btheta$.
\begin{equation}\label{saa_loss}
\begin{array}{ll}
        \displaystyle\min_{\bm \theta \in \Theta}&\displaystyle  \frac{1}{N}\sum_{i=1}^{N} \ell_{\bm \theta}(\bm x_i,\bm s_i).
\end{array}
\end{equation} 
An appropriate loss is required to measure both the feasibility and optimality of any given solution $\x$. 

\subsection{Loss Function}

In this section, we introduce two loss functions that relax feasibility and optimality in problem \eqref{recovered_p} while penalizing deviations. Two potential reasons why problem \eqref{recovered_p} might fail to replicate the feasibility and optimality of problem \eqref{forward_problem} are: $(i)$ the hypothesis class used for the constraints $g_{\bm \theta}$ and/or the objective $f_{\bm \theta}$ may lack the complexity to capture the behavior of \eqref{forward_problem}, and $(ii)$ the training data could be noisy, meaning the signal $\bm s$ and/or the decisions $\bm x$ may be influenced by unaccounted measurement noise. Typically, the degree of infeasibility of a data point can be quantified by $g_{\bm \theta}(\bm x,\bm s)$, with positive values indicating infeasibility, while suboptimality can be assessed via
\begin{equation}\label{function:suboptimality}
    J_{\bm\theta}(\bm x, \bm s):= f_{\bm \theta}(\bm x,\bm s) -  \left[
    \begin{array}{cl}
\displaystyle\min_{\bm y\in\mathbb{R}^n}  & f_{\bm \theta}(\bm y,\bm s)\\
{\rm s.t.}& g_{\bm \theta}(\bm y,\bm s) \leq 0
    \end{array}
    \right].
\end{equation}
For a fixed $\bm\theta$, $J_{\bm\theta}(\bm x, \bm s)$ quantifies the difference between the objective value achieved by data point $(\bm x, \bm s)$ under the hypothesized objective $f_{\bm \theta}(\bm x,\bm s)$ and the optimal value of \eqref{recovered_p} with signal $\bm s$. In noise-free scenarios, negative $J_{\bm\theta}(\bm x, \bm s)$ suggest over-constraint in \eqref{recovered_p}, while positive values imply overly relaxed feasibility, rendering $(\bm x, \bm s)$ suboptimal.

The loss functions are termed \emph{predictability} and \emph{suboptimality loss}, respectively.
\begin{subequations}\label{losses}
\begin{equation} 
 \ell_{\bm\theta}^\text{p}(\bm x,\bm s) := \left[
    \begin{array}{rl}
     \displaystyle\min  & \|\bm \gamma\|\\
   {\rm s.t.} & \bm \gamma\in\mathbb{R}^n\\
   &g_{\bm \theta} (\bm x+\bm \gamma,\bm s) \leq 0\\
    & J_{\bm\theta}(\bm x+\bm \gamma, \bm s) \leq0\\
    \end{array}\right], 
\end{equation}
\begin{equation}
        \ell_{\bm \theta}^\text{sub}(\bm x,\bm s) := \left[
    \begin{array}{rl}
    \min  & \|(\gamma_{f},\gamma_o)\|\\
    {\rm s.t.}  & \gamma_o, \gamma_f\in\mathbb{R}_+\\
    &g_{\bm \theta} (\bm x,\bm s) \leq  \gamma_{f}\\
    &\displaystyle  J_{\bm\theta}(\bm x, \bm s) \leq \gamma_o
    \end{array}\right].
\end{equation}
\end{subequations}
Figure~\ref{fig:Losses} illustrates the two loss functions. The $\bm\gamma$ variable (red) in the predictability loss $\ell_{\bm\theta}^\text{p}(\bm x,\bm s)$ allows for repositioning the observed $\bm x$ to achieve feasibility and optimality while penalizing the extent of adjustment. Note that $\bm \gamma$ doesn't merely project $\bm x$ into the feasible region $g_{\bm\theta}(\cdot,\bm s)$ but balances between infeasibility and suboptimality. In the suboptimality loss $\ell_{\bm \theta}^\text{sub}(\bm x,\bm s)$, $\gamma_f$ and $\gamma_o$ (light blue) act as slack variables, regulating the levels of infeasibility and suboptimality independently.

\begin{figure}[h!]
 \centering
 \includegraphics[scale=1]{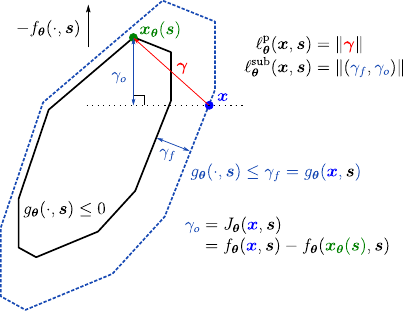}
 \caption{Pictorial representation of the predictability and suboptimality loss functions.}
    \label{fig:Losses}
\end{figure}

The predictability loss was first proposed in \cite{aswani2018inverse} using a slightly different formulation. The naming convention  can be explained by defining variable $\bm y=\bm x+\bm \gamma$, where $\bm \gamma$ shifts $\bm x$, leading to the objective function $\min_{\bm y\in\mathcal{X}(\bm s)} , \|\bm y-\bm x\|$. Here, $\mathcal{X}(\bm s)$ denotes the set of optimal solutions of problem~\eqref{recovered_p}. Consequently, the loss penalizes the discrepancy between the observed decision $\bm x$ and the potential predicted decision $\bm y$. Unlike the loss proposed in \cite{mohajerin2018data} which assumed known constraints, the suboptimality loss extends the loss to unknown constraints penalizing both infeasibility and suboptimality. 

The following proposition shows that the proposed loss functions \eqref{losses} are well defined, in the sense that for any $\bm\theta\in\Theta$ and  $(\bm x,\bm s)$ in the training dataset, the loss $\ell_{\bm \theta}(\bm x,\bm s)$ is zero if and only if $\bm x$ is also an optimal solution of \eqref{recovered_p}. In other words, given a rich enough hypothesis class for $g_{\bm\theta}$ and $f_{\bm\theta}$, the set of optimal solutions of \eqref{recovered_p} coincides with the set of observed optimal solutions of \eqref{forward_problem}. Moreover, the statement implies that if the optimizer $\bm \theta^*$ of \eqref{saa_loss} does not achieve a zero loss, i.e., there exists  $(\bm x, \bm s)$ in the training data such that  $\ell_{\bm \theta^*}(\bm x,\bm s) > 0$, then there is no other $\bm\theta$ in the hypothesis class that perfectly describes the measured pair $(\bm x,\bm s)$. We term this property as \emph{full characterization}. 

\begin{proposition}[Full characterization]\label{prop:full}
For any $\bm \theta\in\Theta$ and the IO data pair~$(\bm x,\bm s)$ generated by the forward model~\eqref{forward_problem}, both predictability~$\ell_{\bm\theta}^\text{p}(\bm x,\bm s)$ and suboptimality~$\ell_{\bm\theta}^\text{sub}(\bm x,\bm s)$ loss defined in~\eqref{losses} satisfy
\begin{equation*}\label{proposition_full_characterization}
     \ell_{\bm \theta}(\bm x,\bm s) =0 \iff \bm x \in \left[
     \begin{array}{cl}
          \displaystyle\argmin_{\bm y\in\mathbb{R}^n}&  f_{\bm \theta}(\bm y,\bm s)  \\
          \text{\em s.t.}&  g_{\bm \theta}(\bm y,\bm s) \leq 0
     \end{array}
     \right].
\end{equation*}
\end{proposition}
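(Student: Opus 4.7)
\textbf{Proof plan for Proposition \ref{prop:full}.}
The plan is to handle both loss functions through a single observation that does all the work: whenever $\bm y\in\mathbb{R}^n$ is feasible for problem~\eqref{recovered_p}, i.e., $g_{\bm\theta}(\bm y,\bm s)\leq 0$, the definition of $J_{\bm\theta}$ in~\eqref{function:suboptimality} forces $J_{\bm\theta}(\bm y,\bm s)\geq 0$, with equality precisely when $\bm y$ attains the minimum in~\eqref{recovered_p}. Consequently, the joint conditions $g_{\bm\theta}(\bm y,\bm s)\leq 0$ and $J_{\bm\theta}(\bm y,\bm s)\leq 0$ are equivalent to $\bm y$ being an optimal solution of~\eqref{recovered_p}. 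This \emph{backbone equivalence} is the engine of the argument, and the rest is just matching it to each of the two loss programs.

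For the predictability loss $\ell_{\bm\theta}^{\text{p}}$, I would observe that since $\|\cdot\|$ is a norm, the minimum is zero if and only if $\bm\gamma=\bm 0$ is feasible for the inner program. Plugging $\bm\gamma=\bm 0$ into its two constraints gives exactly $g_{\bm\theta}(\bm x,\bm s)\leq 0$ and $J_{\bm\theta}(\bm x,\bm s)\leq 0$, which by the backbone equivalence characterizes optimality of $\bm x$ for~\eqref{recovered_p}. The converse is immediate: if $\bm x$ is optimal, then $g_{\bm\theta}(\bm x,\bm s)\leq 0$ and $J_{\bm\theta}(\bm x,\bm s)=0$, so $\bm\gamma=\bm 0$ is feasible and attains the objective value $0$.

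For the suboptimality loss $\ell_{\bm\theta}^{\text{sub}}$, the argument is structurally identical. Non-negativity of $(\gamma_f,\gamma_o)$ together with the norm objective ensures $\ell_{\bm\theta}^{\text{sub}}(\bm x,\bm s)=0$ iff $(\gamma_f,\gamma_o)=(0,0)$ is feasible, i.e., $g_{\bm\theta}(\bm x,\bm s)\leq 0$ and $J_{\bm\theta}(\bm x,\bm s)\leq 0$. Applying the backbone equivalence closes the forward direction; the converse is obtained by exhibiting $(\gamma_f,\gamma_o)=(0,0)$ as a feasible point of objective value zero whenever $\bm x$ is optimal for~\eqref{recovered_p}.

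The only technical delicacy, and hence the closest thing to an obstacle, is ensuring $J_{\bm\theta}(\bm x,\bm s)$ is well defined: the inner minimum in~\eqref{function:suboptimality} must be finite (and, for the equality case, attained). This is where one would invoke standing regularity on $f_{\bm\theta}, g_{\bm\theta}$ (e.g., continuity together with boundedness or coercivity on the feasible set), which are implicit in the IO setup. Under these, the proof reduces to the bookkeeping sketched above and no deeper difficulty arises.
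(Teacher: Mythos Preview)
Your proposal is correct and follows essentially the same approach as the paper's proof: both argue that the loss vanishes iff the zero slack is feasible, which in turn reduces to $g_{\bm\theta}(\bm x,\bm s)\le 0$ together with $J_{\bm\theta}(\bm x,\bm s)\le 0$, and then use the obvious fact that feasibility forces $J_{\bm\theta}\ge 0$ to conclude optimality. Your explicit isolation of the ``backbone equivalence'' and your remark on the attainment of the inner minimum are slight refinements, but the logical content is the same.
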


\subsection{Measuring Performance}\label{sec:perf}
In this subsection, we define metrics for the goodness of fit in an out-of-sample evaluation. In general, there are two ways to measure the performance. The first method is to directly measure the feasibility and optimality of the optimal solutions generated by learned optimization problems. To this end, for fixed $\bm\theta'\in\Theta$, define $\bm x^*_{\bm\theta'}(\bm s)$ to be an optimizer from problem \eqref{recovered_p}. Thus we evaluate the \emph{true loss} $\sum_{i=1}^{N_\text{out}}\ell(\bm x^*_{\bm\theta'}(\bm s_i),\bm s_i)$ using the true predictability loss
\begin{subequations}\label{true_losses}
\begin{equation}\label{trueloss:predictability}
\ell^\text{p}(\bm x,\bm s) := \left[
    \begin{array}{rl}
     \displaystyle\min  & \|\bm \gamma\|\\
   {\rm s.t.} & \bm \gamma\in\mathbb{R}^n\\
   &g(\bm x+\bm \gamma,\bm s) \leq 0\\
    & J(\bm x+\bm \gamma, \bm s) \leq0\\
    \end{array}\right]
\end{equation}
and true suboptimality loss
\begin{equation} \label{trueloss:suboptimality}
\ell^\text{sub}(\bm x,\bm s) := \left[
    \begin{array}{rl}
    \min  & \|(\gamma_{f},\gamma_o)\|\\
    {\rm s.t.}  & \gamma_o\in\mathbb{R}_+,\, \gamma_f\in\mathbb{R}_+\\
    &g (\bm x,\bm s) \leq  \gamma_{f}\\
    & J(\bm x, \bm s) \leq \gamma_o
    \end{array}\right].
\end{equation}
\end{subequations}

In most practical scenarios, we do not have access to the forward problem \eqref{forward_problem}, preventing a direct evaluation of the true performance of $\bm{\theta}$. Nevertheless, we often have additional data $\{(\bm{x}_i, \bm{s}_i)\}_{i=1}^{N{\text{out}}}$ that was not utilized during the training phase. As an alternative, we can straightforwardly compute $\sum_{i=1}^{N_\text{out}}\ell_{\bm{\theta}'}(\bm{x}_i, \bm{s}_i)$ for a selected $\bm{\theta}' \in \Theta$ for both the predictability and suboptimality loss functions. We illustrate the relationships between these four metrics below in Figures~\ref{fig:Losses2}. From Figure~\ref{fig:Losses2} (bottom), it can be seen that if the recovered optimal solution $\bm x_{\bm\theta} (\bm s)$ coincide with the observed optimal $\x$, all losses become zero. Additionally, the true predictability loss $\ell^\text{p}$ and sample-based loss $\ell^\text{p}_{\bm \theta}$ are equivalent when optimal solutions are unique for both true and recovered problems. It is also worth noting that the true suboptimality loss coincides with the ``Smart Predict, then Optimize" (SPO) loss in \cite{ref:Smart}; see Remark~4.4 in \cite{ref:incenter} for more details. 

\begin{figure}[h!]
 \centering
\includegraphics[width=0.9\linewidth]{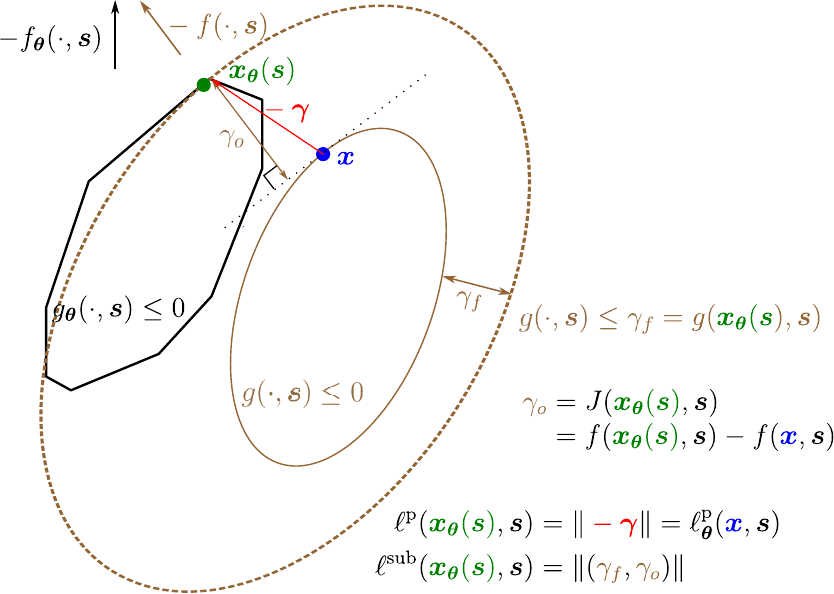}
\includegraphics[width=0.9\linewidth]{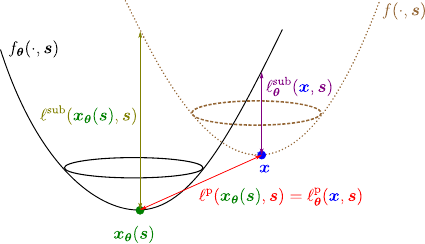}
 \caption{\textbf{Top:}~Pictorial representation of true predictability and suboptimality losses. \textbf{Bottom:}~relationship between estimated out-of-sample loss ($\ell_{\bm\theta}^{\text{p}},\,\ell_{\bm\theta}^{\text{sub}}$) and true out-of-sample loss  ($\ell^{\text{p}},\,\ell^{\text{sub}}$).}
    \label{fig:Losses2}
\end{figure}

\section{Hypothesis Class for $g_{\theta}$ and Reformulations}\label{sec:hypothesis}

In this section, we restrict the admissible constraint function $g_{\bm \theta}$ to a specific hypothesis class and reformulate the predictability and suboptimality losses.  For the remainder of the paper, we assume the objective function is known and focus on the unknown constraints.

Let $\bm A_{\bm \theta}(\bm s)$ and $\bm b_{\bm \theta}(\bm s)$ be a matrix and vector with appropriate dimensions, and restrict the constraint~to
\begin{subequations}\label{hypothesis}
\begin{equation}\label{g_hypothesis}
    g_{\bm \theta}(\bm x,\bm s)= \min_{\bm z\in\mathcal Z}\|\bm x-\bm A_{\bm\theta}( \bm s)\bm z-\bm b_{\bm \theta}(\bm s)\|.
\end{equation}
The function uses a latent variable $\bm z$, which resides in the predetermined conic \emph{primitive set} 
\begin{equation}\label{primitive_set}
    \mathcal{Z} = \{\bm z\in\mathbb{R}^p\,:\, H\bm z -\bm h\in \mathcal{K}\}
\end{equation}
\end{subequations}
where matrices $H\in\mathbb{R}^{l\times p}$ and $\bm h \in\mathbb{R}^l$ and the proper convex cone $\mathcal{K}$ are given.

Intuitively, the hypothesis class \eqref{hypothesis} controls the feasible region of $\bm x$ by manipulating the primitive set $\mathcal{Z}$.  Indeed, we can see that the constraint $g_{\bm \theta}(\bm x,\bm s)\leq 0$ can be reformulated to $\bm x = \bm A_{\bm\theta}( \bm s)\bm z+\bm b_{\bm \theta}(\bm s),\,\bm z\in\mathcal{Z}$. In other words, for each pair $(\bm x,\bm s)$ there exists a $\bm z\in\mathcal{Z}$ that maps to $\bm x$ through the linear map $\bm A_{\bm\theta}( \bm s)\bm z+\bm b_{\bm \theta}(\bm s)$. Hence, through the choice of $\bm\theta$, the matrix $ \bm A_{\bm\theta}(\bm s)$ can scale, rotate and project the primitive set $\mathcal{Z}$, while vector $\bm b_{\bm\theta}(\bm s)$ is responsible for translating the set. An alternative way to view the hypothesis is that the learning model induces the policy $\bm x(\bm s) = \bm A_{\bm\theta}( \bm s)\bm z(\bm s)+\bm b_{\bm \theta}(\bm s)$  for some $\bm z(\bm s)\in\mathcal{Z}$, hence problem~\eqref{recovered_p} aims to learn the policy that best fits the training data. The choice of the primitive set~$\mathcal{Z}$ plays a crucial role in approximation highlighted in the next remark. 

\begin{remark}[Choice of primitive sets]
\label{rem:primitive}
    When the primitive set~$\mathcal{Z}$ is a $p$-dimensional simplex, the resulting policy can be seen as a switched version of $p$ different policies, each represented by a column of~$\bm A_{\bm\theta}( \bm s)$. The latent variable~$\z_i(\bm s)$ acts as the switching mechanism. More broadly, the matrix $\bm A_{\bm\theta}( \bm s)$ projects the high-dimensional simplex (or any other polytopic primitive set) onto the space of $\bm x$. Increasing the dimensions of the primitive set enhances the flexibility of the hypothesis class. Conversely, if the constraints of the forward problem \eqref{forward_problem} are believed to be ellipsoidal, an appropriate choice is $\mathcal{Z} = \{\bm z \in \mathbb{R}^n \mid \|\bm z\|_2 \leq 1\}$, with $\bm A_{\bm\theta}( \bm s)$ rotating and scaling the set accordingly.
\end{remark}

The following example illustrates the richness of the IO models with the constraint class \eqref{hypothesis} by demonstrating $(i)$ how it can learn a forward problem with a discontinuous policy, and $(ii)$ why no IO model with only objective learning (e.g., \cite{mohajerin2018data}) is sufficient to achieve the same.

\begin{example}[Constraint vs. objective learning in IO] 
\label{example1}
    Motivated by power systems, consider 
\begin{equation}\label{example_toy}
    \begin{array}{cl}
        \displaystyle\min_{\bm x\in[0,2]^2}
        & sx_1 + (1-s)x_2 \quad {\rm s.t.} \quad  x_1+x_2=1,
    \end{array}
\end{equation}
where $x_1$ and $x_2$ represent two generators output aiming to meet the demand $x_1 + x_2 = 1$ at the lowest possible cost. The signal $s \in [0,1]$ indicates the per-unit production cost of the first generator, while the cost of the other is proportional to $1-s$. The optimal policy of~\eqref{example_toy} is 
\begin{equation}
\label{example_toy_policy}
(x_1^*(s), x_2^*(s)) = 
\begin{cases} 
(1, 0) & \text{if } s < 0.5, \\
([0, 1], [0, 1]) & \text{if } s = 0.5, \\
(0, 1) & \text{if } s > 0.5,
\end{cases}
\end{equation}
such that $x_1^*(0.5) + x_2^*(0.5) = 1$.
The hypothesis class \eqref{hypothesis} can recover the optimal policy by defining $\mathcal{Z} = \{\bm{z} \in [0,1]^2 : \bm{e}^\top \bm{z} = 1\}$ as the two-dimensional unit simplex, and selecting $A_{\bm{\theta}}(\bm{s}) = I$ (the $2\times 2$ identity) and $\bm{b}_{\bm{\theta}}(\bm{s}) = 0$ such that the resulting policy yields $x_1(s) = z_1(s)$ and $x_2(s) = z_2(s)$. Selecting $z_1(s)$ and $z_2(s)$  as in \eqref{example_toy_policy} satisfies $\mathcal{Z}$ by construction, and recovers the optimal policy. 
However, attempting to learn a quadratic function $f_{\bm{\theta}}(\bm{x}, s) = \bm{x}^\top Q_{\bm{\theta}} \bm{x} + \bm{x}^\top A_{\bm{\theta}} s $ results in the linear policy $\bm{x}(s) = Q_{\bm{\theta}}^{-1} \bm A_{\bm{\theta}} s$ for any $s \in (0,1)$ while saturating at the boundary for any $s = 0$ or $s = 1$, indicating that learning a quadratic cost is indeed insufficient to capture the optimal policy.
\end{example}

\subsection{Reformulation for Linear Objective Functions}
For the remainder of the paper, and without loss of generality, we make the assumption that we can express $A_{\bm \theta}(\bm s)$ and $b_{\bm \theta}(\bm s)$ as affine functions of $\bm s$ through $\bm A_{\bm \theta}(\bm s) = \bm A_0 + \bm A_1s_1 + \ldots + \bm A_Ks_K$ and  $\bm b_{\bm \theta}(\bm s) = \bm b_0 + \bm b_1s_1 + \ldots + \bm b_Ks_K$ where $\bm A_k\in\mathbb{R}^{n\times p}$ and $\bm b_k\in\mathbb{R}^{n}$ for $k=0,\ldots,K$, i.e., $\bm\theta = (\{\bm A_k,\bm b_k\}_{k=0}^K)$. The following theorem provides the reformulation for the learning problem \eqref{saa_loss}. 

\begin{theorem}[Exact reformulation]
\label{thm:convex}
Let $f_{\bm\theta}(\bm x, \bm s) = \bm c(\bm s)^\top \bm x$ and $g_{\bm\theta}$ given in \eqref{hypothesis}. 
The learning problems can be reformulated as follows, using the predictability loss
\begin{subequations}\label{optimization_problems}
    \begin{equation}\label{predictability_problem}
\begin{array}{@{}r@{~}l@{}l}
    \min   & \displaystyle\frac{1}{N} \sum_{i = 1}^N \|\bm \gamma_{i}\|\\
    \textnormal{s.t.}      &\bm A_k\in\mathbb{R}^{n\times p},\, \bm b_k\in\mathbb{R}^{n},\, \forall k\leq K,\\
    &\left.
    \!\!\!\begin{array}{l}
     \bm\gamma_i\in\mathbb{R}^n,\,\bm z_i\in\mathbb{R}^p,\, \bm \lambda_i \in \mathcal K^*\\
    \bm x_i + \bm \gamma_{i} = \bm A_{\bm\theta}(\bm s_i)\bm z_i + \bm b_{\bm\theta}(\bm s_i)\\
     H\bm z_i - \bm h \in \mathcal{K}\\
          \bm c(\bm s_i)^\top\bm A_{\bm\theta}(\bm s_i) - \bm \lambda_i^\top H = 0 \\
     \bm c(\bm s_i)^\top(\bm x_i+\bm\gamma_{i}-\bm b_{\bm\theta}(\bm s_i)) - \bm \lambda_i^\top \bm h \leq 0\\
     \end{array}
    \right\} \forall i\leq N
    \end{array}\\
    \end{equation}
    and using the suboptimality loss
    \begin{equation}\label{suboptimality_problem}
         \begin{array}{@{}r@{~}l@{}l}
    \min   & \displaystyle\frac{1}{N} \sum_{i = 1}^N \|(\gamma_{f,i},\gamma_{o,i})\|\\
    \textnormal{s.t.}      &\bm A_k\in\mathbb{R}^{n\times p},\, \bm b_k\in\mathbb{R}^{n},\, \forall k\leq K,\\
    &\left.
    \!\!\!\begin{array}{l}
    \gamma_{f,i},\gamma_{o,i}\in\mathbb{R}_+, \bm\gamma_i\in\mathbb{R}^n,\,\bm z_i\in\mathbb{R}^p\\
    \bm x_i + \bm \gamma_{i} = \bm A_{\bm\theta}(\bm s_i)\bm z_i + \bm b_{\bm\theta}(\bm s_i)\\
    \|\bm \gamma_i\|\leq \gamma_{f,i}\\
     H\bm z_i - \bm h \in \mathcal{K}, \bm \lambda_i \in \mathcal K^*\\
    \bm c(\bm s_i)^\top\bm A_{\bm\theta}(\bm s_i) - \bm \lambda_i^\top H = 0 \\
     \bm c(\bm s_i)^\top(\bm x_i-\bm b_{\bm\theta}(\bm s_i)) - \bm \lambda_i^\top \bm h \leq \gamma_{o,i}
     \end{array}
    \right\} \forall i\leq N
    \end{array} 
    \end{equation}
\end{subequations}
\end{theorem}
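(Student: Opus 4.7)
The plan is to reformulate the inner optimization inside each loss by (i) lifting the $g_{\bm\theta}$-constraint into the primitive set $\mathcal{Z}$ via an auxiliary variable $\bm z$, and (ii) applying strong conic duality to the inner minimization that appears in $J_{\bm\theta}$. I would treat the predictability case first and then point out the single line of difference that yields the suboptimality case.

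First, since $g_{\bm\theta}(\bm x,\bm s) = \min_{\bm z \in \mathcal Z}\|\bm x - \bm A_{\bm\theta}(\bm s)\bm z - \bm b_{\bm\theta}(\bm s)\|$ is a nonnegative quantity, the constraint $g_{\bm\theta}(\bm x+\bm\gamma,\bm s)\le 0$ is equivalent to the existence of $\bm z \in \mathcal{Z}$ with $\bm x + \bm\gamma = \bm A_{\bm\theta}(\bm s)\bm z + \bm b_{\bm\theta}(\bm s)$. Unpacking $\mathcal Z$ via $H\bm z - \bm h \in \mathcal K$ recovers the third and fourth lines of~\eqref{predictability_problem}. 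For the suboptimality loss, the same lifting is applied to $g_{\bm\theta}(\bm x,\bm s)\le\gamma_f$, except that the norm $\|\bm x - \bm A_{\bm\theta}(\bm s)\bm z - \bm b_{\bm\theta}(\bm s)\|$ is only required to be at most $\gamma_f$; introducing a residual $\bm\gamma_i$ equal to the displacement and imposing $\|\bm\gamma_i\|\le\gamma_{f,i}$ produces the corresponding rows of~\eqref{suboptimality_problem}.

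Second, the suboptimality requirement $J_{\bm\theta}(\bm x+\bm\gamma,\bm s)\le 0$ (respectively $\le\gamma_o$) needs the value of the inner forward problem. Using the same lifting, this inner problem becomes the conic program $\min_{\bm z} \bm c(\bm s)^\top \bm A_{\bm\theta}(\bm s)\bm z + \bm c(\bm s)^\top\bm b_{\bm\theta}(\bm s)$ subject to $H\bm z-\bm h \in \mathcal K$. Its Lagrangian dual, formed with $\bm\lambda \in \mathcal K^*$, is $\max_{\bm\lambda \in \mathcal K^*}\bm\lambda^\top\bm h$ subject to $\bm c(\bm s)^\top\bm A_{\bm\theta}(\bm s) - \bm\lambda^\top H = 0$. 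Invoking strong duality, the inequality ``the primal optimal value is at least $\bm c(\bm s)^\top(\bm x+\bm\gamma)$'' is equivalent to the existence of a dual-feasible $\bm\lambda$ with $\bm c(\bm s)^\top\bm b_{\bm\theta}(\bm s) + \bm\lambda^\top \bm h \ge \bm c(\bm s)^\top(\bm x+\bm\gamma)$, which rearranges into the last two lines of~\eqref{predictability_problem}. For the suboptimality case, the only change is that a slack $\gamma_{o,i}$ appears on the right-hand side of this inequality, yielding the corresponding rows of~\eqref{suboptimality_problem}.

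Third, assembling the reformulated per-sample constraints across $i=1,\dots,N$ and treating $\{\bm A_k,\bm b_k\}_{k=0}^K$ as decision variables gives the SAA problem~\eqref{saa_loss} in the stated form; the affine dependence $\bm A_{\bm\theta}(\bm s)=\bm A_0+\sum_k \bm A_k s_k$ and analogously for $\bm b_{\bm\theta}(\bm s)$ enters the constraints linearly, so no further manipulation is needed. The main obstacle is the strong-duality step: because the inner program is linear in $\bm z$ over a proper-cone feasible set, zero duality gap holds under a standard constraint qualification (e.g., Slater on $\mathcal Z$ or primal feasibility together with dual attainment). Without such an implicit regularity assumption, dualization only yields an outer relaxation; with it, the equivalence of the two formulations is exact. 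The remaining manipulations (rewriting $g_{\bm\theta}\le 0$ as an equality after lifting, and folding the existential quantifier on $\bm\lambda$ into the minimization) are routine once duality is in hand.
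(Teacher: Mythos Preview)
Your proposal is correct and follows essentially the same route as the paper's proof: the paper proves the theorem via a lemma that (i) rewrites $g_{\bm\theta}(\bm x+\bm\gamma,\bm s)\le 0$ as the existence of $\bm z\in\mathcal Z$ with $\bm x+\bm\gamma=\bm A_{\bm\theta}(\bm s)\bm z+\bm b_{\bm\theta}(\bm s)$, and (ii) handles $J_{\bm\theta}\le 0$ by dualizing the inner conic program, exactly as you outline. If anything, you are more explicit than the paper about the need for a constraint qualification on $\mathcal Z$ to guarantee strong duality (the paper simply writes ``after dualizing the second term, we obtain'' without stating this hypothesis).
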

Both problems \eqref{optimization_problems} share similar complexity in the sense that the constraints are linear except for the bilinear term $A_{\bm\theta}(\bm s_i)\bm z_i$. The problem can be efficiently approximated in practice by performing block coordinate descent \cite{MR3444832} on matrices $\{\bm A_k\}_{k=0}^K$ and vectors $\{\bm z_i\}_{i=1}^N$ sequentially until convergence. The gradient descent-based algorithm for the predictability loss in \eqref{optimization_problems} is outlined in Algorithm~\ref{alg:vanilla}. A similar algorithm can be derived for the suboptimality loss. It updates $\{\boldsymbol A_k\}_{k=1}^K$ with gradient descent and solves \eqref{optimization_problems} to compute $\{\bm z_i\}_{i=1}^N$ for fixed $\{\boldsymbol A_k\}_{k=1}^K$. The gradient of the predictability loss \eqref{optimization_problems} with respect to $\{\bm A_k\}_{k=0}^K$ is as follows: For some $(\bm x,\bm s)$, denote $\bm z^*$ as the optimal latent variables, $\bm \beta^*$ as the optimal dual multipliers of constraint $\bm x + \bm \gamma = \bm A_{\bm\theta}(\bm s)\bm z + \bm b_{\bm\theta}(\bm s)$, and $\bm \mu^*$ as the optimal dual multipliers of constraint $\bm c(\bm s)^\top\bm A_{\bm\theta}(\bm s) + \bm \lambda^\top H = 0$. The gradient of the loss function is  
\begin{equation}
    \frac{\partial}{\partial \bm A_k}\ell_{\bm\theta}^\text{p}(\bm x,\bm s) = s_k(\bm c(\bm s)\bm \mu^{*\top} - \bm \beta^*\bm z^{*\top}), \quad \forall k = 0,\ldots,K \,,
\end{equation}
where $s_0 = 1$.  
In practice, the step size  $\eta$ of the gradient descent  can by dynamically updated at each iteration by performing backtracking such as Armijo's rule or other criteria.  

\begin{algorithm}[h!]
\caption{Gradient descent based algorithm for  \eqref{optimization_problems} using predictability loss}\label{alg:vanilla}
\begin{algorithmic}[1]
\STATE \textbf{Initialization: } $\{A^1_k\}_{k = 0}^K$,\, $T$, $\eta$, $t=1$;
\WHILE{$t \leq T$}
\STATE Solve \eqref{optimization_problems} and denote by $\{\bm z_i^*,\bm\beta_i^*,\bm \mu_i^*)\}_{i=1}^N$ the optimal solution and dual multipliers.
\STATE  $\bm A_k^{t+1} = \bm A_k^{t} - \eta\sum_{i=1}^N s_k (\bm c(\bm s_i)\bm \mu_i^\top - \bm \beta_i^*\bm z_i^{*\top}).$
\STATE $t=t+1.$
\ENDWHILE
\end{algorithmic}
\end{algorithm}

\subsection{Adaptive Smoothing}
One can inspect that the optimum of these programs~\eqref{optimization_problems} is typically a nonsmooth function in the variable $\{\bm A_k\}_{k=0}^K$ due to the inner optimization over the multiplicative decision variable~$\{\bm z_i\}_{i=1}^N$. This observation motivates us to deploy smoothing techniques (e.g., Nesterov's smoothing~\cite{nesterov2018lectures}) to improve our algorithm performance. With this in mind, the next remark provides a connection between the two optimization programs~\eqref{optimization_problems} (i.e., predictability vs suboptimality loss), which intuitively sheds light on why the suboptimality loss has often better numerical performance from a computational viewpoint, see numerical experiments in Table~\ref{table:alg_exp} and more details in Appendix~\ref{sec:pro:bi}.

\begin{remark}[Suboptimality loss as smoothed predictability loss] \label{rem:smoothness}
Consider the suboptimality problem~\eqref{suboptimality_problem}. The feasibility variable~$\gamma_{f,i}$ of~\eqref{suboptimality_problem} coincides with Nesterov's smoothing counterpart~\cite{nesterov2005smooth} of the predictability objective function in~\eqref{predictability_problem} when the distance function is consistent with the underlying norm in these programs with an appropriate smoothness parameter. We refer to Appendix~\ref{app:smoothing} for the details to formalize this discussion.   
\end{remark}

Inspired by Remark~\ref{rem:smoothness}, we define smoothed predictability by relaxing the hard constraint involving $\{\bm A_k\}_{k=0}^K$ and penalizing them in the loss function. This is formally defined in the next definition (suboptimality loss follows the same logic and will be presented in Appendix \ref{app:smoothing}). For clarity, we assume $f_{\bm\theta}(\bm x, \bm s) = \bm c(\bm s)^\top \bm x$ and $g_{\bm\theta}$ as given in~\eqref{hypothesis}. 

\begin{definition}[Smoothed predictability loss] 
The $\epsilon$-smoothed counterpart of the predictability loss   \eqref{predictability_problem}   is defined through the optimization program 
\begin{equation}\label{smoothed_formulation}
    \begin{array}{@{}r@{~}l@{}ll}
    \min   & \displaystyle\frac{1}{N} \sum_{i = 1}^n \|\bm \gamma_{i}\| + \epsilon_1 \sum_{i=1}^n \|\bgamma_{s1}\| + \epsilon_2 \sum_{i=1}^n \|\bgamma_{s2}\| \\
    {\rm s.t.}      &\bm A_k\in\mathbb{R}^{n\times p},\, \bm b_k\in\mathbb{R}^{n},\, \forall k=1,\ldots,K,\\
    &\left.
    \!\!\!\begin{array}{l}
     \bm\gamma_i\in\mathbb{R}^n,\,\bm z_i\in\mathbb{R}^p,\, \bm \lambda_i \in \mathcal K^*\\
    \bm x_i + \bm \gamma_{i} = \bm A_{\bm\theta}(\bm s_i)\bm z_i + \bm b_{\bm\theta}(\bm s_i) + \bgamma_{s1}\\
     H\bm z_i - \bm h \in \mathcal{K}\\
     \bm c(\bm s_i)^\top(\bm x_i+\bm\gamma_{i}-\bm b_{\bm\theta}(\bm s_i)) - \bm \lambda_i^\top \bm h \leq 0\\
     \bm c(\bm s_i)^\top\bm A_{\bm\theta}(\bm s_i) - \bm \lambda_i^\top H + \bgamma_{s2} = 0 \\
     \end{array}
    \right\} \, \forall i\leq N.
    \end{array}
\end{equation}
\end{definition}

In order to optimize the smoothed losses~\eqref{smoothed_formulation}, we choose to adaptively increase the penalizing coefficients $\epsilon = (\epsilon_1,\epsilon_2)$. This allows the magnitude of the slack variables $\gamma_{s1}$ and $\gamma_{s2}$ to diminish to zero, thus solving the original predictability loss in \eqref{optimization_problems}. With this in mind, we present the final algorithm in Algorithm~\ref{Alg:smooth}.
\begin{algorithm}[h!]
\caption{Adaptive smoothing algorithm }\label{Alg:smooth}
\begin{algorithmic}[1]
\STATE \textbf{Initialization: } $\{\bm A^1_k\}_{k = 0}^K$,\, $T$, $\eta$, $t=1$, $\epsilon_1$ and $\epsilon_2$ for predictability loss;
\WHILE{$t \leq T$}
\STATE Solve \eqref{smoothed_formulation} and denote by $\{\bm z_i^*,\bm\beta_i^*,\bm \mu_i^*, \bgamma_{s1}^*, \bgamma_{s2}^*\}_{i=1}^N$ the optimal solution, dual multipliers, and smoothing variables.
\STATE  $\bm A_k^{t+1} = \bm A_k^{t} - \eta\sum_{i=1}^N s_k (\bm c(\bm s_i)\bm \mu_i^\top - \bm \beta_i^*\bm z_i^{*\top}).$
\STATE Re-solve problem~\eqref{smoothed_formulation} to get new values of $\{\bgamma^*_{s1},\bgamma^*_{s2}\}_{i=1}^N$
\STATE If the change in $\sum_{i=1}^N \|\bgamma_{s1^*}\|$ or $\sum_{i=1}^N \|\bgamma_{s2^*}\|$ is small enough, increase the value of $\epsilon_1 \text{or } \epsilon_2$.
\STATE $t=t+1.$
\ENDWHILE
\end{algorithmic}
\end{algorithm}

We use simple updating rules for adjusting $(\epsilon_1, \epsilon_2)$ in Algorithm \ref{Alg:smooth}: The initial values are $\epsilon_1 = \epsilon_2=1$, and every time the change in the values of $\sum_{i=1}^N \|\bgamma_{s1}\|^2$ and $\sum_{i=1}^N \|\bgamma_{s2}\|^2$ are less than $0.01/10^{(\log_2(\epsilon_{1})+1)}$, we multiply the parameters~$\epsilon_{1}$ and $\epsilon_{2}$ by~$2$.

The convergence property of Algorithms \ref{alg:vanilla} and \ref{Alg:smooth} is formally summarized in the following proposition.
 \begin{proposition}[Convergence]  \label{prop_convergence} Let $\bm \theta^t = \{\bm A^t_k\}_{k = 0}^K$  where $\{\bm A^t_k\}_{k = 0}^K$ is the outcome of Algorithm 1 (Algorithm~\ref{Alg:smooth}, respectively) after $t$ iterations with the Armijo rule stepsize, and $\{\bm b^t_k\}_{k = 0}^K$  be the solution of the proposed predictability or suboptimality loss function defined in \eqref{optimization_problems} (the smoothed version \eqref{smoothed_formulation1} in Section~\ref{Section_adaptive_smoothing} in the supplementary, respectively) when the matrices $\bm A_k$  are set to the proposed algorithm outcome. Then, the loss function value is monotonically decreasing, i.e., for any pair $(\bm x, \bm s)$  we have $\ell_{\bm \theta^t}(\bm x, \bm s)\geq \ell_{\bm \theta^{t+1}}(\bm x, \bm s)\geq 0$, and hence, it convergences to a finite nonnegative value (local optimal).
 \end{proposition}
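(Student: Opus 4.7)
The plan is to decompose one outer iteration of Algorithm~\ref{alg:vanilla} (respectively Algorithm~\ref{Alg:smooth}) into two phases and argue monotone decrease of the loss in each phase separately, then invoke the monotone bounded convergence theorem. Nonnegativity of the limit is immediate, since both $\ell_{\bm\theta}^{\text{p}}$ and $\ell_{\bm\theta}^{\text{sub}}$ are defined as optima of norm objectives over nonempty feasible sets, so they are bounded below by zero.

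The first step is to view the loss $\ell_{\bm\theta}(\bm x,\bm s)$ as a value function of the block variable $\{\bm A_k\}_{k=0}^K$, with the remaining variables $\{\bm z_i,\bm\gamma_i,\bm\lambda_i,\bm\beta_i,\bm\mu_i\}_{i=1}^N$ (and $\gamma_{s1},\gamma_{s2}$ in the smoothed case) optimized out. I would first show that the optimization step over these auxiliary variables in Algorithm~\ref{alg:vanilla} line~3 (respectively Algorithm~\ref{Alg:smooth} line~3 and line~5) can only decrease the loss relative to the previous iterate, since the previous iterate's variables remain feasible for the new subproblem when $\{\bm A_k^t\}$ is fixed; this follows from the block coordinate descent principle applied to the reformulations~\eqref{optimization_problems} and \eqref{smoothed_formulation}.

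The second step is the gradient update on $\{\bm A_k\}$. Using the gradient expression
\[
\tfrac{\partial}{\partial \bm A_k}\ell_{\bm\theta}^{\text{p}}(\bm x,\bm s) \;=\; s_k\bigl(\bm c(\bm s)\bm\mu^{*\top} - \bm\beta^{*}\bm z^{*\top}\bigr),
\]
which is valid by Danskin's envelope theorem at optimal primal-dual pairs $(\bm z^*,\bm\beta^*,\bm\mu^*)$, the Armijo backtracking rule by construction selects a stepsize $\eta$ for which the composite loss strictly decreases whenever the gradient is nonzero and is unchanged otherwise. Combining both phases yields $\ell_{\bm\theta^t}(\bm x,\bm s) \geq \ell_{\bm\theta^{t+1}}(\bm x,\bm s) \geq 0$ for every data pair $(\bm x,\bm s)$, hence the sequence is monotone nonincreasing and bounded below, so it converges to a finite nonnegative limit; for the smoothed algorithm, once $(\epsilon_1,\epsilon_2)$ stabilize (which they do after finitely many updates up to any target tolerance triggered by the stopping test in line~6), the same argument applies to the smoothed loss at the frozen penalty level.

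The main obstacle is justifying the envelope formula for the gradient: the loss is the optimal value of a parametric convex program in the remaining variables, so strictly speaking it is only directionally differentiable in $\{\bm A_k\}$ unless the inner optimizer is unique. This is precisely what motivates the smoothing in Algorithm~\ref{Alg:smooth} (see Remark~\ref{rem:smoothness}): the penalty terms $\epsilon_1\|\bgamma_{s1}\|+\epsilon_2\|\bgamma_{s2}\|$ render the inner problem strongly convex in the affected variables, so Danskin's theorem applies unambiguously and the gradient step becomes well defined. For Algorithm~\ref{alg:vanilla}, I would circumvent the difficulty by interpreting the update as a subgradient/generalized-gradient step and appealing to the Armijo condition in the nonsmooth sense, which still guarantees the monotone decrease that is all we need for the convergence claim (no rate is asserted). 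Together, these give the statement.
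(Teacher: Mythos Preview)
Your proposal is correct and follows essentially the same approach as the paper's own proof: both rely on the block coordinate descent structure (gradient step on $\{\bm A_k\}$ with Armijo stepsize, exact minimization over the remaining convex block) together with nonnegativity of the loss to conclude monotone convergence. The paper's proof is considerably terser---it simply cites standard coordinate descent and Armijo descent results from Bertsekas without discussing Danskin's theorem, the nonsmoothness of the value function, or the changing penalty parameters in Algorithm~\ref{Alg:smooth}---so your treatment is in fact more careful than the original on these points.
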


We illustrate the difference between Algorithms~\ref{alg:vanilla} and \ref{Alg:smooth} through a noiseless example where the true hypothesis class is covered by the primitive set. The final results are reported in Table~\ref{table:alg_exp}; see also Appendix~\ref{sec:pro:bi} for further related details. It is worth mentioning that when using the vanilla gradient decent Algorithm~\ref{alg:vanilla}, the suboptimality loss has much better performance (since it has better smoothness), whereas leveraging the smooth counterpart of Algorithm~\ref{Alg:smooth} achieves competitive performance with both loss functions (see also Figure~\ref{fig:comp_smooth_reg} for more details concerning the relevant statistics). 
\begin{table}[h!]
    \centering
    \caption{Training loss of Algorithms~\ref{alg:vanilla} and \ref{Alg:smooth} on different losses, where the global optimal is zero.}\label{table:alg_exp}
    \begin{tabular}{|c|c|c|}
    \hline

    loss          & Alg~\ref{alg:vanilla} & Alg~\ref{Alg:smooth} \\
    \hline
 
    Predictability     & $1.90 \pm 0.36$ & $(2.5 \pm 7.2) \times 10^{-4}$ \\

    \hline
    Suboptimality     & $0.03 \pm 0.05$ & $(8.6 \pm 2.1) \times 10^{-4}$ \\
    \hline
    \end{tabular}
\end{table}

\subsection{Convex  and Mixed-integer Reformulations}\label{sec:convexandmilp}

In this section, we discuss choices of $g_{\bm \theta}$ for which problems~\eqref{optimization_problems}  can be cast as convex or mixed-integer programs which can be solved using off-the-shelf solvers. Restricting $\bm A_\theta(\bm s) = \alpha \in\mathbb{R}_+$, i.e., $\bm\theta = (\alpha,\{\bm b_k\}_{k=0}^K)$ will achieve a convex reformulation. This approximation only allows for scaling and translation of the primitive set $\mathcal Z$ and does not permit rotation or projection.  The following proposition provides the convex reformulation of the predictability loss. The suboptimality loss can be reformulated in a similar way.
\begin{proposition}[Tractable convex reformulation]
Let $f_{\bm\theta}(\bm x, \bm s) = \bm c(\bm s)^\top \bm x$ and $g_{\bm\theta}$ defined in~\eqref{hypothesis} with $\bm A_\theta(\bm s) = \alpha\in\mathbb{R}_+$ and  $\bm\theta = (\alpha,\{\bm b_k\}_{k=0}^K)$. Then, the predictability loss \eqref{optimization_problems} can be reformulated as the convex optimization
\begin{equation}\label{convex:predictability} 
    \begin{array}{@{}r@{~}l@{}ll}
    \min   & \displaystyle\frac{1}{N} \sum_{i = 1}^N \|\bm \gamma_{i}\|\\
    \textnormal{s.t.}      &\alpha\in\mathbb{R}_+,\, \bm b_k\in\mathbb{R}^{n},\, \forall k\leq K,\\
    &\left.
    \!\!\!\begin{array}{l}
     \bm\gamma_i\in\mathbb{R}^n,\,\bm \zeta_i\in\mathbb{R}^p,\, \bm \lambda_i \in \mathcal K^*\\
    \bm x_i + \bm \gamma_{i} = \bm \zeta_i + \bm b_{\bm\theta}(\bm s_i)\\
     H\bm \zeta_i - \alpha\bm h \in \mathcal{K}\\
          \bm \alpha c(\bm s_i)^\top - \bm \lambda_i^\top H = 0 \\
     \bm c(\bm s_i)^\top(\bm x_i+\bm\gamma_{i}-\bm b_{\bm\theta}(\bm s_i)) - \bm \lambda_i^\top \bm h \leq 0\\
     \end{array}
    \right\}  \forall i\leq N.\\
    \end{array} 
\end{equation}
\end{proposition}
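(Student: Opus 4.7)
The plan is to start from the exact reformulation of the predictability loss given in Theorem~\ref{thm:convex} (equation~\eqref{predictability_problem}) and specialize it to the restricted hypothesis class where $\bm A_{\bm\theta}(\bm s)$ collapses to the scalar $\alpha \in \mathbb{R}_+$. In~\eqref{predictability_problem}, the only source of nonconvexity is the bilinear term $\bm A_{\bm\theta}(\bm s_i)\bm z_i$ appearing in the identity that maps the latent variable into the decision space; all remaining constraints, notably the stationarity condition $\bm c(\bm s_i)^\top \bm A_{\bm\theta}(\bm s_i) - \bm \lambda_i^\top H = 0$, the dual-feasibility $\bm\lambda_i \in \mathcal{K}^*$, and the primal-dual optimality inequality, are already affine or conic in the decision variables (using that $\bm c(\bm s_i)$ is a known datum and $\bm A_{\bm\theta}$ depends linearly on $\bm\theta$). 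Since the objective $\tfrac{1}{N}\sum_i \|\bm\gamma_i\|$ is a norm, convexity of the program will follow once the single bilinear equality has been convexified.

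The core step is a linear change of variables $\bm\zeta_i := \alpha\,\bm z_i$ that eliminates the bilinearity. After substitution, the primal-feasibility constraint $\bm x_i + \bm\gamma_i = \alpha\,\bm z_i + \bm b_{\bm\theta}(\bm s_i)$ becomes the affine identity $\bm x_i + \bm\gamma_i = \bm\zeta_i + \bm b_{\bm\theta}(\bm s_i)$ of~\eqref{convex:predictability}. The membership $\bm z_i \in \mathcal{Z}$, i.e.\ $H\bm z_i - \bm h \in \mathcal{K}$, is handled by conic homogeneity: for any $\alpha > 0$ one has $\alpha \mathcal{K} = \mathcal{K}$, so multiplying through by $\alpha$ yields $H\bm\zeta_i - \alpha\bm h \in \mathcal{K}$, and conversely every $\bm\zeta_i$ satisfying this constraint recovers an admissible $\bm z_i = \bm\zeta_i/\alpha \in \mathcal{Z}$. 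The stationarity condition now reads $\alpha\,\bm c(\bm s_i)^\top - \bm\lambda_i^\top H = 0$, which is linear in $(\alpha, \bm\lambda_i)$, and the optimality inequality is unchanged and affine in $(\bm\gamma_i, \bm\lambda_i, \{\bm b_k\})$.

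The main technical obstacle I anticipate is justifying the bijection at the degenerate boundary $\alpha = 0$, where $\bm z_i = \bm\zeta_i/\alpha$ is not defined. At $\alpha = 0$ the original formulation forces $\alpha \bm z_i = \bm 0$, whereas the rescaled cone constraint $H\bm\zeta_i \in \mathcal{K}$ may admit nonzero $\bm\zeta_i$, seemingly enlarging the feasible set. I would address this either by (i) restricting attention to $\alpha > 0$ and invoking lower semicontinuity of the optimal value as $\alpha \downarrow 0$, or (ii) noting that any spurious nonzero $\bm\zeta_i$ permitted at $\alpha = 0$ can be compensated by the free translation $\bm b_{\bm\theta}(\bm s_i)$ so that the optimum is unchanged. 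Once this subtlety is resolved, the feasible set of~\eqref{convex:predictability} is an intersection of affine equalities, affine inequalities, and conic constraints, which combined with the convex norm objective establishes convexity of the reformulation and completes the equivalence with~\eqref{predictability_problem}.
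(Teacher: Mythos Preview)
Your proposal is correct and follows essentially the same approach as the paper, which also introduces the substitution $\bm\zeta_i = \alpha\bm z_i$ and invokes the positive homogeneity of $\mathcal{K}$ to rewrite $H\bm z_i - \bm h \in \mathcal{K}$ as $H\bm\zeta_i - \alpha\bm h \in \mathcal{K}$. In fact you are more careful than the paper, which does not comment on the degenerate case $\alpha = 0$ at all; note, however, that your resolution~(ii) via the shift $\bm b_{\bm\theta}(\bm s_i)$ does not quite work because the $\bm b_k$ are shared across all samples while the $\bm\zeta_i$ are per-sample, so option~(i) is the safer route.
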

The auxiliary variable $\bm\zeta$ replaces $\alpha\bm z$ in constraints $\bm x+\bm \gamma = \alpha\bm z +\bm b(\bm s)$ and $H\alpha\bm z - \alpha\bm h\in\mathcal{K}$, where the latter stems from multiplying $H\bm z - \bm h\in\mathcal{K}$ by $\alpha\in\mathbb{R}_+$.   Despite its limitations, this choice of hypothesis class can enhance computational efficiency for large-scale problems.

Restricting either $\bm z$ or $\bm A_{\bm\theta}(\bm s)$ to be binary, allows to reformulate the bilinear term $\bm A_{\bm\theta}(\bm s)\bm z$ using linear inequalities (McCormick inequalities) and formulating the problem as a mixed-interger linear program. The application domain will dictate which of the two will be binary, with two interesting examples emerging. For the first, notice that setting $\mathcal{Z}_{\text{bin}}:=\mathcal Z\cap\{0,1\}^p$ constitutes a restriction to the hypothesis class.   The next observation provides conditions under which the restriction to the integer $\mathcal{Z}$ is done without loss of optimality.
\begin{observation}[Discrete vs. continuous primitive sets]
Let $f_{\bm\theta}(\bm x, \bm s) = \bm c(\bm s)^\top \bm x$ and $g_{\bm\theta}$ given in \eqref{hypothesis} with $\mathcal{Z}_{\text{bin}} = \{\bm z\in\{0,1\}^p\,:\,\bm e^\top \bm z = 1\}$, and let $\bm\theta^*$ be the optimal value of problem~\eqref{optimization_problems} using the predictability loss. If $\bm c(\bm s)^\top\bm A_{\bm\theta^*}(\bm s)$ is not parallel to any of the facets of the simplex for all $\bm s$, then the optimal value of the problem will coincide with the optimal value of the predictability loss problem \eqref{optimization_problems} where $\mathcal{Z} = \{\bm z\in \mathbb R_+^p\,:\,\bm e^\top \bm z = 1\}$.
\end{observation}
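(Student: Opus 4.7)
The plan is to sandwich the two optimal values. Let $v_c$ and $v_b$ denote the optima of the predictability-loss reformulation~\eqref{predictability_problem} when $\mathcal Z$ is the continuous simplex $\{\bm z\in\mathbb R_+^p:\bm e^\top \bm z=1\}$ and when $\mathcal Z_{\text{bin}}$ is used, respectively. Since $\mathcal Z_{\text{bin}}\subset\mathcal Z$, every binary-feasible tuple is automatically continuous-feasible, yielding the free direction $v_c\leq v_b$. The content of the observation is thus the reverse inequality $v_b\leq v_c$, which I would obtain by taking a continuous-optimal tuple and showing that the latent variables $\bm z_i$ can be assumed to lie in $\mathcal Z_{\text{bin}}$ without any change in objective value.

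Let $(\bm\theta^*,\{\bm\gamma_i^*,\bm z_i^*,\bm\lambda_i^*\}_{i=1}^N)$ attain $v_c$. The constraints of~\eqref{predictability_problem} coupling $\bm z_i$ and $\bm\lambda_i$ — primal feasibility $H\bm z_i-\bm h\in\mathcal K$, dual feasibility $\bm c(\bm s_i)^\top \bm A_{\bm\theta}(\bm s_i)=\bm\lambda_i^\top H$, together with $\bm x_i+\bm\gamma_i=\bm A_{\bm\theta}(\bm s_i)\bm z_i+\bm b_{\bm\theta}(\bm s_i)$ and the linking inequality — collapse after substitution to $\bm\lambda_i^\top(H\bm z_i-\bm h)\leq 0$, which, combined with $\bm\lambda_i\in\mathcal K^*$ and $H\bm z_i-\bm h\in\mathcal K$, forces complementary slackness. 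These are exactly the KKT conditions certifying that $\bm z_i^*$ is a minimizer of the inner linear program
\begin{equation*}
    \min_{\bm z\in\mathcal Z}\; \bm c(\bm s_i)^\top\bm A_{\bm\theta^*}(\bm s_i)\,\bm z.
\end{equation*}
For a standard simplex, the optimal face of such an LP is the convex hull of those unit vectors $\bm e_j$ for which $(\bm c(\bm s_i)^\top\bm A_{\bm\theta^*}(\bm s_i))_j$ attains the minimum. The parallelism hypothesis forces this minimum to be attained at a single index, so $\bm z_i^*=\bm e_{j(i)}\in\mathcal Z_{\text{bin}}$ for each $i$.

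Consequently, the very same tuple $(\bm\theta^*,\{\bm\gamma_i^*,\bm z_i^*,\bm\lambda_i^*\})$ is feasible for the binary-restricted problem with identical objective value, giving $v_b\leq v_c$ and closing the sandwich. The main obstacle I anticipate is making the step \emph{``not parallel to any facet implies unique vertex optimum''} watertight: read literally, a facet of $\Delta^{p-1}$ is a $(p-2)$-dimensional face and the stated hypothesis only excludes the LP optimum from being such a facet, while a lower-dimensional face (an edge, say) could still host the optimum and collapse the uniqueness argument. I would resolve this by reading the hypothesis in the general-position sense, namely that all entries of $\bm c(\bm s_i)^\top\bm A_{\bm\theta^*}(\bm s_i)$ are distinct, which is exactly the condition that genuinely delivers a unique vertex optimum and thereby the claimed equality of optimal values.
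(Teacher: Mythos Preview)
Your proposal is correct and mirrors the paper's own argument (contained in the proof of Proposition~\ref{prop:milp} in Appendix~\ref{app:prop:milp}): the restriction direction is immediate, and for the reverse direction the paper likewise uses that the constraints of~\eqref{predictability_problem} force $\bm x_i+\bm\gamma_i^*$ to be optimal in the inner LP~$\min_{\bm z\in\mathcal Z}\bm c(\bm s_i)^\top\bm A_{\bm\theta^*}(\bm s_i)\bm z$, after which the non-parallelism hypothesis pins down a unique vertex. Your closing caveat about the literal reading of ``facet'' is apt---the paper silently uses the same general-position interpretation.
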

For the second case, assume that $g_{\bm\theta}$ given in \eqref{hypothesis} with  $\bm A_{\bm \theta}(\bm s)=\bm{A}\in\{-1,0,1\}^{n\times p}$ and $\bm b_{\bm \theta}(\bm s):\Theta\mapsto\mathbb{R}^{n}$ for all $\bm s$. This choice of the hypothesis class can be useful for learning equality constraints, for example, learning the physical links in a network where $\bm{A}$ dictates the connectivity of the network.  Notice that the number of binary variables needed for the reformulation of the problem is independent to the size of the dataset. We will see an example of this hypothesis class used in Section~\ref{learningnetwork} to learn the structure of a power network.

\section{Numerical Experiments}\label{sec:real}

We apply our methods on an instance of a power system described in \cite{bampou2011scenario}. Both problems have the same forward problem, i.e., a network flow problem with the following formulation. We consider a power system consists of a set of regions $ \mathcal R = \{1, \cdots , 5\}$ with electricity demands $s^{\text{demand}}_r$, $r \in R$. Demands are satisfied by a set $\mathcal N = \{1, 2, 3\}$ of power plans, where each plant $n \in \mathcal N$ produces $x_n$ units of energy at costs $s^{\text{cost}}_n$. Regions are connected by a set $\mathcal M = \{1, \cdots , 5\}$ of directed transmission lines. Each line $m\in \mathcal M$ has a capacity of $\bar{f}_m$ units of energy.  A pictorial representation of the network is given in Figure~\ref{fig:power_system} (top).

The forward problem is formulated as follows where $\mathcal N(r)$ denotes the set of generators in node $r$, with $\mathcal M_+(r)$ and $\mathcal M_-(r)$ denoting the sets of incoming and outgoing flows from node $r$, respectively.
\begin{equation}\label{app:model}
\begin{array}{ll}
    \min &  {\bm s^{\text{cost}}}^\top \bm x \\
    \text{s.t.} & x_n \in \mathbb{R}_+, \, x_n \leq C_n, \, \forall n \in \mathcal{N}, \\
                &  f_m \in \mathbb{R}, \, |f_m| \leq \bar{f}_m, \, \forall m \in \mathcal{M}, \\
                &  \sum_{n \in \mathcal{N}(r)} x_n + \sum_{m \in \mathcal{M}_+(r)} f_m \\
                & \quad = \sum_{m \in \mathcal{M}_-(r)} f_m + s_r^{\text{demand}}, \, \forall r \in \mathcal{R}.
\end{array}
\end{equation}
We set $C_n = 3.5$ for all $n\in\mathcal{N}$ and $\bar{f}_m = 3.5$ for all $m \in \mathcal M$.  We generate $N_{\text{train}} = 100$ data points  for training and $N_{\text{test}} = 200$ data points for testing, by generating signals uniformly at random from $s^{\text{cost}}_1\in[0.2, 1]$, $s^{\text{cost}}_2\in [0.2, 0.5]$,  $s^{\text{cost}}_3\in[1, 2]$ and $s^{\text{demand}}_1\in[0.3, 1.5],\,s^{\text{demand}}_2\in[0.36,1.8],\,s^{\text{demand}}_3\in[0.42,2.1],\,s^{\text{demand}}_4\in [0.48,2.4]$ and $s^{\text{demand}}_5\in [0.54, 2.7]$, and solving problem~\eqref{app:model} to obtain pairs $\{\s_i,\x_i\}_{i = 1}^{N=100}$. Notice that the flow decisions $f_m$ are treated as lurking variables and are not observed.

\subsection{Problem 1: Inferring Generation Policy without Knowing Constraints}\label{sec:power_app1}
In the first experiment, we assume the decision-maker is aware of  the objective function of the forward problem but lacks knowledge of the constraint's structure. This scenario intuitively represents a situation where decision-makers aim to minimize total costs without being aware of any specific business rules. We compare four policies: $(i)$ We use the hypothesis~\eqref{hypothesis} where $\mathcal{Z}$ is the unit simplex of dimension $p\in\{3,6,9\}$ using the Adaptive Smoothing Algorithm~\ref{Alg:smooth} with a limit of 3000 iterations, $(ii)$ we use the convex formulation discussed in Section~\ref{sec:convexandmilp} using the unit simplex of dimension $p = 3$, $(iii)$ we compare against the linear policy induced by learning the quadratic function $f_{\bm{\theta}}(\bm{x}, s) = \bm{x}^\top Q_{\bm{\theta}} \bm{x} + \bm{x}^\top A_{\bm{\theta}} \bm s $, see Example~\ref{example1}, and $(iv)$ we use the hypothesis~\eqref{hypothesis} where $\mathcal{Z}$ is the unit simplex of dimension $p\in\{3,6,9\}$ and solve problems \eqref{optimization_problems} using the non-convex quadratic solver of Gurobi v11.0.3 with a time limit of 1800 seconds. We note that  policy $(iii)$ will coincide with the linear regression policy $\min_{\bm\theta\in\Theta}\sum_{i = 1}^{N_\text{train}} \|\bm x_i - (\bm A_{\bm \theta}(\bm s_i) - \bm b_{\bm\theta}(\bm s_i))\|^2_2$ since the constraints of the problem are assumed unknown. For all methods, we solve both the corresponding predictability and suboptimality loss problems. 
Table~\ref{tab:multicol} shows the out-of-sample performance of each method using the predictability $\ell_\theta^\text{p}$ and suboptimality $\ell_\theta^\text{sub}$ losses, as well as the true predictability $\ell^\text{p}$ and suboptimality $\ell^\text{sub}$ losses. As discussed in Section~\ref{sec:perf}, by definition $\ell_\theta^\text{p} = \ell^\text{p}$. We observe the following: $(i)$ The convex formulation is the most computationally efficient but may yield lower quality solutions compared to other methods. $(ii)$ Learning a quadratic cost is significantly inferior to other methods. $(iii)$ Increasing the dimension $p$ of the unit simplex enhances the hypothesis class flexibility, improving out-of-sample performance for both the predictability and suboptimality loss. $(iv)$ Algorithm 2 performs significantly better compared to using Gurobi in almost all cases, particularly the most complex ones.
We further investigate the performance of the proposed approach using the larger IEEE 14-bus system \cite{leon2020quadratically}. The results are presented in Appendix~\ref{app:ieee14}.

\begin{table}[h!]
\vspace{-0.1in}
\caption{Summary of out-of-sample  performances. }\label{tab:multicol}
\begin{center}
\small{
\begin{tabular}{|c|c|c|c|c|c|}
\multicolumn{2}{c}{Hypothesis Class and Loss}& \multicolumn{3}{c}{Evaluation method}\\
    \hline
    & loss & $\ell_\theta^p /\ell^p$ &  $\ell_\theta^\text{sub}$ &  $\ell^{\text{sub}}$ & time \\
    \hline
    \multirow{2}{*}{Alg~2,\,$p=3$}&  predict. &  $3.20$ & $0.17$ & $0.83$ & $569.0$s\\
       \cline{2-6}
       & subopt. & $5.50$  & $0.18$ & $2.45$ & $430.8$s\\

    \hline
    \multirow{2}{*}{Alg~2,\,$p=6$}& predict. &  $1.91$ & $0.05$ & $0.51$ & $1159.6$s \\
       \cline{2-6}
       & subopt. & $1.50$ & $0.03$ & $0.42$ & $754.1$s \\
    \hline
    \multirow{2}{*}{Alg~2,\,$p=9$}& predict. &  $1.58$ & $0.05$ & $0.69$ & $591.7$s \\
    \cline{2-6}
       &subopt. & $1.61$ & $0.04$ & $0.50$ & $590.6s$\\
    \hline
    \hline
    \multirow{2}{*}{Convex, $p=3$}& predict. &  $3.36$ & $2.07$ & $0.34$ & $0.31$s \\
    \cline{2-6}
       &subopt. & $10.75$  & $0.31$ & $7.72$ & $0.15s$\\
    \hline
        \hline
    \multirow{1}{*}{quadratic cost}& predict. &  $2.58$  & $2.56$ & $0.63$ & $2.54$s \\
    \hline
     \hline
    \multirow{2}{*}{Gurobi $p=3$}&  predict.  &  $6.32$ & $4.44$ & $0.49$ & $1800$s\\
       \cline{2-6}
       & subopt. &  $3.36$ & $0.05$ & $1.30$ & $1800$s\\
    \hline
    \multirow{2}{*}{Gurobi $p=6$}& predict. &  $6.54$ & $4.31$ & $0.76$ & $1800$s\\
       \cline{2-6}
       & subopt. &  $7.76$ & $0.15$ & $4.96$ & $1800$s  \\

    \hline
    \multirow{2}{*}{Gurobi $p=9$}& predict. &  $6.66$ & $4.25$ & $0.77$ & $1800$s \\
    \cline{2-6}
       &subopt. & $11.00$ & $4.63$ & $5.19$ & $1800$s\\
    \hline
\end{tabular}}
\vspace{-0.1in}
\end{center}
\end{table}

\subsection{Problem 2: Learning Power Network Structures}\label{learningnetwork}

In the second experiment, we assume the decision-maker knows the capacity constraints and demand locations but is unaware of the transmission line positions. The goal is to recover the network structure using inverse optimization, in effect learning the last constraint of problem~\eqref{app:model}. In a network with 5 nodes, there are 10 possible transmission line connections, and we aim to identify the configuration that best fits the data. Using the same experimental setup as the previous example, we treat the flow decisions as latent variables. {Leveraging the structure of \eqref{hypothesis}, we can re-express the policy $\bm x(\bm s) = \bm A_{\bm\theta}( \bm s)\bm z(\bm s) + \bm b_{\bm \theta}(\bm s), \bm z(s)\in \mathcal{Z}$ as the following 5 constraints, each corresponding to a node in the network
\begin{equation*}
    \hat{\bm x}(\bm s)   = \bm A  \bm z(\bm s)   + \bm s^{\text{demand}},\,  \bm z(s)\in\mathcal{Z} = [-\bar{f}_m,\bar{f}_m]^{10},
\end{equation*}
where $\hat{\bm x}(\bm s) = [x_1(\bm s),0,x_2(\bm s),0,x_3(\bm s)]^\top$ denotes the injection of energy at in the network.
\begin{figure}[h!]
    \centering
   \includegraphics[width=0.8\linewidth]{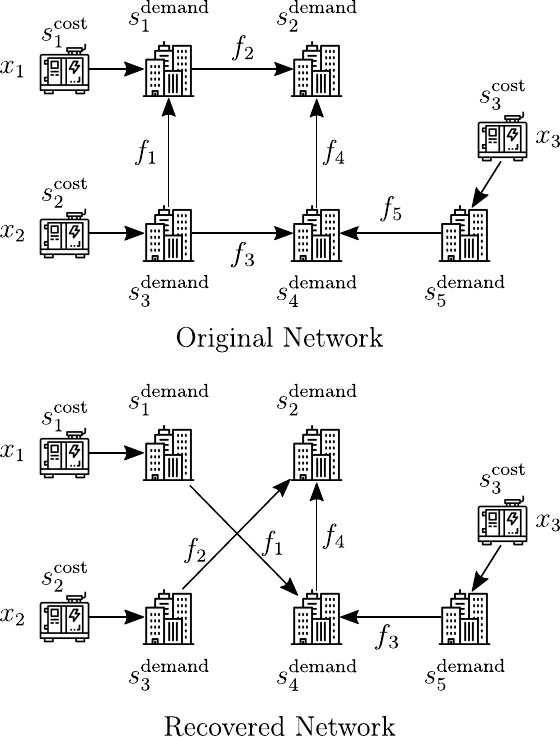}
    \caption{\textbf{Top:} Power network topology used in the forward problem in Section~\ref{sec:real}. \textbf{Bottom:} Recovered network from inverse optimization in Section~\ref{learningnetwork}.}
    \label{fig:power_system}
\end{figure} 
Notice that generators exist in nodes 1, 3 and 5, see Figure~\ref{fig:power_system} (top). The decision matrix $\bm A\in\{0,1\}^{5\times 10}$ controls the connectivity of the network, e.g., if $A_{1,2} = 1$, transmission line 2 emanates from node 1.} {Note that $\bm A$ is subject to additional constraints to ensure that there are only $C_5^2=10$ potential line connections.}
The auxiliary vector $\bm z(\bm s)$ represents the energy flow in the transmission lines. Therefore, we define the primitive set as a hyper-rectangle $\mathcal{Z} = [-\bar{f}_m,\bar{f}_m]^{10}$, which mirrors the transmission capacities. Since we assume that the locations of the generators are known, the given matrix $G$ dictates their position in the system. The inverse optimization problem is presented in Appendix~\ref{app:flow} and optimizes over matrices $\bm A$ 
. As discussed in Section~\ref{sec:convexandmilp}, the problem can be cast as a mixed-integer linear program for both the predictability and suboptimality losses. Both approaches achieved zero loss indicating that the recovered problem is able to exactly describe the training data. The recovered network is presented in Figure~\ref{fig:power_system} (bottom) in the appendix. It is interesting to note that the two networks have different transmission line configuration, with the recovered network having 4 lines instead of 5 that the original network has. Nevertheless, even with 4 lines, the recovered problem is able exactly match the observed data and achieve zero loss. Finally, we compare our approach with the state-of-the-art method for learning problem constraints proposed by \cite{aswani2018inverse}, which also utilizes predictability loss. Their method involves enumerating all potential solutions, assessing the loss for each configuration. It is worth noting that there are 10 potential line connections, each of which can either be absent or present, resulting in $2^{10}$ configurations. We estimated that evaluating each configuration takes on average of $2.64$ seconds, thus enumerating all possibilities would take approximately $2.64 \times 2^{10}$ seconds to find the global optimum. In contrast, our proposed method solves the training problem in just $23.5$ seconds.

\section{Limitations and Future Work}\label{sec:conclusion}
In closing, we acknowledge several limitations: $(i)$ Our method supports only known linear objectives, excluding more complex structures like quadratic objectives which will be interesting to examine further. However, as shown in Example~\ref{example1}, the proposed approach does creates complex policy structures, unlike the linear policy induced from learning a quadratic objective. $(ii)$ It is worth exploring further cases where the hypothesis class leads to convex and MILP formulations (similar to Section~\ref{sec:convexandmilp}), reducing reliance on local search algorithms. $(iii)$ Our hypothesis class induces a non-trivial relationship between the choice of the primitive set and resulting policy behavior. Future work will focus on understanding the impact of the primitive set on the policies it generates, akin to the choice of basis functions in classical linear regression.

\section*{Acknowledgements}
This work was partly supported by the European Research Council (ERC) project TRUST-949796.

\section{Impact Statements}
This paper presents work whose goal is to advance the field of Machine Learning. There are many potential societal consequences of our work, none which we feel must be specifically highlighted here.


\bibliography{bibliography_ICML2025}
\bibliographystyle{icml2025}

\newpage
\appendix
\onecolumn

\section{Proof of Proposition \ref{prop:full}}\label{app:proof:full}
\begin{proof}
We first prove the statement for the predictabilily loss. Assume that there exists  $\bm \theta\in\Theta$ and $(\bm x,\bm s)$ in the training dataset such that $\ell_{\bm\theta}^\text{p}(\bm x,\bm s) = 0$. This implies that there exists  $\bm\gamma = 0$ since the objective of the predictability loss in \eqref{losses} is $\|\bm\gamma\|$. Thus,  the constraints are satisfied with $g_{\bm \theta}(\bm x,\bm s) \leq 0$ which implies that $\bm x$ is feasible in \eqref{recovered_p}. Moreover, by construction  it implies that 
\begin{equation}\label{prop1:eq}
    f_{\bm\theta}(\bm x,\bm s) \geq \left[
    \begin{array}{rl}
    \displaystyle\min_{\bm y\in\mathbb{R}^n}& f_{\bm \theta}(\bm y,\bm s)\\
    {\rm s.t.} & g_{\bm \theta}(\bm y,\bm s) \leq  0\\ 
\end{array}\right].
\end{equation}
In addition, since $\bm\gamma = 0$ it implies that $J_{\bm\theta}(\bm x, \bm s) \leq0$, which combined with \eqref{prop1:eq} imply that indeed $\bm x$ is an optimizer of~ \eqref{recovered_p}. 

To prove the reverse implication, consider any optimal $\bm x$ from \eqref{recovered_p}. Since $\bm x$ is feasible and optimal in \eqref{recovered_p} it implies that  $\bm\gamma=0$ is feasible in \eqref{losses}. Moreover, since the objective function in \eqref{losses} is $\|\bm\gamma\|$, there does not exist another $\bm\gamma\in\mathbb{R}^n$ that achieves a lower objective, thus $\bm\gamma=0$ is also optimal, hence $\ell_{\bm\theta}^\text{p}(\bm x,\bm s)=0$, which concludes the proof. The proof for the suboptimality loss follows similar arguments.
\end{proof}

\section{Proof of Theorem \ref{thm:convex}} \label{app:proof:reformulation}
The proof of Theorem \ref{thm:convex} can be obtained from the following lemma.
\begin{lemma}\label{lemma:reformulation}
Let $f_{\bm\theta}(\bm x, \bm s) = \bm c(\bm s)^\top \bm x$. The constraints of the predictability loss in \eqref{losses} can be reformulated as follows
\begin{subequations}
\begin{equation}\label{reformulation:predictability}
\begin{array}{l}
g_{\bm \theta}(\bm x+\bm \gamma,\bm s)\leq 0 \quad \iff \quad  \left\{
    \begin{array}{l}
    \exists \bm z \in \mathcal Z\\
\bm x+\bm \gamma  = \bm A_{\bm\theta}(\bm s)\bm z+\bm b_{\bm\theta}(\bm s)
    \end{array}\right.\\~\\  
    J_{\bm \theta}(\bm x+\bm \gamma,\bm s)\leq 0  
\quad \iff \quad 
\left\{
    \begin{array}{l}
    \exists \bm \lambda\in\mathcal{K}^*\\
     \bm c(\bm s)^\top(\bm x+\bm\gamma -  \bm b_{\bm\theta}(\bm s)) - \bm \lambda^\top \bm h \leq 0\\
     \bm c(\bm s)^\top\bm A_{\bm\theta}(\bm s) - \bm \lambda^\top H = 0\\
     \end{array}\right.\\ 
\end{array}
\end{equation}
 and the constraints of the suboptimality loss in \eqref{losses} can be reformulated as follows
 \begin{equation}
\begin{array}{l}
g_{\bm \theta}(\bm x,\bm s)\leq  \gamma_f 
\quad \iff \quad 
\left\{
    \begin{array}{l}
    \exists \bm z \in \mathcal Z,\, \bm \gamma\in\mathbb{R}^n\\
\bm x+ \bm \gamma  = \bm A_{\bm\theta}(\bm s)\bm z+\bm b_{\bm\theta}(\bm s)\\ 
\|\bm \gamma\|\leq \gamma_f
    \end{array}\right.\\~\\
    
J_{\bm \theta}(\bm x ,\bm s)\leq \gamma_o 
\quad \iff \quad 
\left\{
    \begin{array}{l}
    \exists \bm \lambda\in\mathcal{K}^*\\
     \bm c(\bm s)^\top(\bm x -  \bm b_{\bm\theta}(\bm s)) - \bm \lambda^\top \bm h \leq \gamma_o\\
     \bm c(\bm s)^\top\bm A_{\bm\theta}(\bm s) - \bm \lambda^\top H = 0\\
     \end{array}\right.\\   
\end{array}
\end{equation}
\end{subequations}
where the norm used is the same as in the definition of $g_{\bm\theta}$.
\end{lemma}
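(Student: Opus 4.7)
The plan is to handle the feasibility constraint and the optimality constraint separately, relying on the explicit definition of $g_{\bm\theta}$ in~\eqref{hypothesis} for the former and on conic LP duality for the latter. The suboptimality reformulations will then follow by carrying $\gamma_f$ and $\gamma_o$ through the same arguments as right-hand-side perturbations.

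First, I would unpack the feasibility equivalence. Since $g_{\bm\theta}(\bm u,\bm s) = \min_{\bm z\in\mathcal{Z}} \|\bm u - \bm A_{\bm\theta}(\bm s)\bm z - \bm b_{\bm\theta}(\bm s)\|$ is the value of a nonnegative optimization problem, the inequality $g_{\bm\theta}(\bm x+\bm\gamma,\bm s)\le 0$ is equivalent to $g_{\bm\theta}(\bm x+\bm\gamma,\bm s)=0$, which, because a norm vanishes only at $\bm 0$, is equivalent to the existence of some $\bm z\in\mathcal{Z}$ with $\bm x+\bm\gamma=\bm A_{\bm\theta}(\bm s)\bm z+\bm b_{\bm\theta}(\bm s)$. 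For the suboptimality loss, the relaxed inequality $g_{\bm\theta}(\bm x,\bm s)\le\gamma_f$ is equivalent to the existence of some $\bm z\in\mathcal{Z}$ with $\|\bm x-\bm A_{\bm\theta}(\bm s)\bm z-\bm b_{\bm\theta}(\bm s)\|\le\gamma_f$; introducing the auxiliary variable $\bm\gamma=\bm A_{\bm\theta}(\bm s)\bm z+\bm b_{\bm\theta}(\bm s)-\bm x$ (up to sign, using the norm symmetry) yields the displayed system with $\|\bm\gamma\|\le\gamma_f$.

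For the optimality equivalence, I would first substitute the feasibility reformulation into the inner program defining $J_{\bm\theta}$. With $f_{\bm\theta}(\bm y,\bm s)=\bm c(\bm s)^\top\bm y$, the hypothesis class reduces the inner problem to the conic program
\begin{equation*}
\min_{\bm z\in\mathbb{R}^p}\ \bm c(\bm s)^\top\bigl(\bm A_{\bm\theta}(\bm s)\bm z+\bm b_{\bm\theta}(\bm s)\bigr)\quad \text{s.t.}\quad H\bm z-\bm h\in\mathcal{K}.
\end{equation*}
Its conic dual is
\begin{equation*}
\max_{\bm\lambda\in\mathcal{K}^*}\ \bm\lambda^\top\bm h+\bm c(\bm s)^\top\bm b_{\bm\theta}(\bm s)\quad \text{s.t.}\quad \bm c(\bm s)^\top\bm A_{\bm\theta}(\bm s)-\bm\lambda^\top H=0,
\end{equation*}
and by strong conic duality (which holds under the standard Slater-type qualification on $\mathcal{Z}$, assumed implicitly throughout Section~\ref{sec:hypothesis}) the two values coincide. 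Substituting this dual characterization into $J_{\bm\theta}(\bm x+\bm\gamma,\bm s)\le 0$ yields $\bm c(\bm s)^\top(\bm x+\bm\gamma)-\bm\lambda^\top\bm h-\bm c(\bm s)^\top\bm b_{\bm\theta}(\bm s)\le 0$ for some dual-feasible $\bm\lambda$, which is exactly the displayed system. The suboptimality variant follows the identical argument after adding $\gamma_o$ to the right-hand side of this inequality.

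The main subtlety, which I would flag explicitly, is the direction of the quantifier on $\bm\lambda$: the dual value is the supremum over $\bm\lambda\in\mathcal{K}^*$ with $\bm c(\bm s)^\top\bm A_{\bm\theta}(\bm s)=\bm\lambda^\top H$, so a priori $J_{\bm\theta}(\bm x+\bm\gamma,\bm s)\le 0$ demands the inequality $\bm c(\bm s)^\top(\bm x+\bm\gamma-\bm b_{\bm\theta}(\bm s))\le\sup_{\bm\lambda}\bm\lambda^\top\bm h$, whereas the reformulation asks for some $\bm\lambda$ making the inequality hold. These are equivalent precisely because the dual is a maximization: if some feasible $\bm\lambda$ works, the sup does too, and conversely if the sup works then any $\epsilon$-optimal dual solution witnesses the existential statement (and, under Slater, the sup is attained, so the equivalence is tight). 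Aside from ensuring strong duality is invoked correctly, the remainder of the proof is routine bookkeeping.
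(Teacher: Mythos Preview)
Your proposal is correct and follows essentially the same route as the paper: unpack the definition of $g_{\bm\theta}$ to handle the feasibility constraint, and dualize the inner conic linear program to handle $J_{\bm\theta}$. In fact your treatment is more careful than the paper's, which simply writes ``after dualizing the second term, we obtain'' and does not comment on the quantifier flip on $\bm\lambda$ or on the need for strong duality and dual attainment; your final paragraph makes those points explicit.
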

\begin{proof}
Recall that the definition of  $ g_{\bm \theta}(\bm x,\bm s)$ is 
\begin{subequations}
\begin{equation}
    g_{\bm \theta}(\bm x,\bm s)= \min_{\bm z\in\mathcal Z}\|\bm x-\bm A_{\bm\theta}( \bm s)\bm z-\bm b_{\bm \theta}(\bm s)\|.
\end{equation}
The function uses a latent variable $\bm z$, which resides in the predetermined conic \emph{primitive set} 
\begin{equation}
    \mathcal{Z} = \{\bm z\in\mathbb{R}^p\,:\, H\bm z -\bm h\in \mathcal{K}\}
\end{equation}
\end{subequations}
Plug the above definition into $g_{\bm \theta}(\bm x+\bm \gamma,\bm s)\leq 0$ gives:
\begin{align*}
      &\exists \bm z \in \mathcal Z,\, \bm \gamma\in\mathbb{R}^n\\
&\bm x+ \bm \gamma  = \bm A_{\bm\theta}(\bm s)\bm z+\bm b_{\bm\theta}(\bm s).
\end{align*}
The reformulation of $J_{\bm \theta}$ is done via the dualization. 

\begin{align*}
    J_{\bm \theta}(\bm x+\bm \gamma,\bm s)\leq 0  
\end{align*}
is equivalent to
\begin{align*}
     \begin{array}{rl}
&\bm c(\bm s)^\top(\bm x+\bm\gamma) -  \left[
    \begin{array}{cl}
\displaystyle\min  & \bm c(\bm s)^\top\bm y\\
{\rm s.t.}& \bm y\in\mathbb{R}^n,\, \\
&\bm y   = \bm A_{\bm\theta}(\bm s)\bm z+\bm b_{\bm\theta}(\bm s)\\
& H\bm z - \bm h \in \mathcal{K},\, \bm \gamma\in\mathbb{R}^n
    \end{array}
    \right]
    \end{array} \leq 0
\end{align*}

After dualizing the second term, we obtain:
\begin{align}
    &\bm c(\bm s)^\top(\bm x+\bm\gamma) - \bm c(\bm s)^\top \bm b_{\bm\theta} - \bm \lambda^\top\bm h \leq 0 \\
    & \bm c(\bm s)^\top \bm A_{\bm\theta}(\bm s) - \bm \lambda^\top H = 0
\end{align}

The proof of suboptimality follows the same procedure.

\end{proof}

{
\section{Proof of Proposition \ref{prop_convergence}} 
\begin{proof}
The proof for both Algorithms \ref{alg:vanilla} and \ref{Alg:smooth} follows the same arguments. First, note that the monotonicity of the proposed algorithms' outcome is a straightforward consequence of its coordinate descent nature; see \citep[Section 6.5]{bertsekas2015convex} for similar techniques. More specifically, a classical result of gradient decent algorithms with several popular stepsizes, including the Armijo rule used in this work, ensures that the desired loss function is monotonically decreasing over the iterations \citep[Section 2.1]{bertsekas2015convex}. This observation of the gradient descent over the coordinate $\bm A$  of $\bm \theta$ , along with solving the convex optimization in \eqref{optimization_problems} over the coordinate $\bm b$ and the fact that the loss function is uniformly nonnegative (bounded from below), concludes that the loss function remains monotonically non-increasing across the iterations. 
\end{proof}
}

\section{MILP formulation and Proposition \ref{prop:milp}} \label{app:prop:milp}

\begin{proposition}[MILP reformuation]
\label{prop:milp}
Let $f_{\bm\theta}(\bm x, \bm s) = \bm c(\bm s)^\top \bm x$ and $g_{\bm\theta}$ given in \eqref{hypothesis} with $\mathcal{Z} = \{\bm z\in\{0,1\}^p\,:\,\bm e^\top \bm z = 1\}$. The predictability loss in \eqref{losses} can be reformulated as
\begin{equation}\label{reform:bilinear1}
     \begin{array}{rl}
     \displaystyle\min  & \|\bm \gamma\|\\
   \textnormal{s.t.} & \bm \gamma\in\mathbb{R}^n,\, \bm z \in \{0,1\}^p,\,  \lambda\in\mathbb{R}\\
&\bm x+\bm \gamma  = \bm A_{\bm\theta}(\bm s)\bm z+\bm b(\bm s)\\ 
&\bm e^\top \bm z = 1\\
    & \bm c(\bm s)^\top(\bm x+\bm\gamma) - \bm c(\bm s)^\top\bm b(\bm s) +  \lambda\leq 0\\
    & \bm c(\bm s)^\top\bm A_{\bm\theta}(\bm s) +   \lambda  \bm e^\top = 0\\
    \end{array}
\end{equation}
\end{proposition}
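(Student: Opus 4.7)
My plan is to specialize the dualization machinery of Lemma~\ref{lemma:reformulation} to the binary primitive set $\mathcal{Z}=\{\bm z\in\{0,1\}^p:\bm e^\top\bm z=1\}$, and then verify that the resulting reformulation is exact by exploiting the extreme-point structure of the standard simplex.

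First, I would treat the feasibility constraint $g_{\bm\theta}(\bm x+\bm\gamma,\bm s)\leq 0$ directly from the definition in~\eqref{hypothesis}: it is equivalent to the existence of some $\bm z\in\mathcal{Z}$ with $\bm x+\bm\gamma=\bm A_{\bm\theta}(\bm s)\bm z+\bm b(\bm s)$, and substituting the explicit description of $\mathcal{Z}$ reproduces the first three displayed constraints of~\eqref{reform:bilinear1} (the linear selection equation, the unit-sum condition $\bm e^\top\bm z=1$, and the integrality $\bm z\in\{0,1\}^p$).

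Next, I would dualize the optimality constraint $J_{\bm\theta}(\bm x+\bm\gamma,\bm s)\leq 0$ by applying Lemma~\ref{lemma:reformulation} with the canonical choice $H=\bm e^\top$, $\bm h=1$, and $\mathcal{K}=\{0\}$; the dual cone is then $\mathcal{K}^{*}=\mathbb{R}$, yielding a free scalar multiplier $\lambda$. After a sign relabeling $\lambda\to-\lambda$, the conic dual conditions of the lemma read precisely
\begin{align*}
\bm c(\bm s)^\top(\bm x+\bm\gamma)-\bm c(\bm s)^\top\bm b(\bm s)+\lambda &\leq 0,\\
\bm c(\bm s)^\top\bm A_{\bm\theta}(\bm s)+\lambda\bm e^\top &= 0,
\end{align*}
matching the last two constraints of~\eqref{reform:bilinear1}.

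The main obstacle I anticipate is verifying that this dualization, which implicitly relaxes $\bm z$ from the discrete set to the affine hyperplane $\{\bm e^\top\bm z=1\}$, is tight. Two observations handle this: $(i)$ the unit coordinate vectors are the extreme points of the standard simplex $\Delta^{p}$, so for the linear inner objective the minimum over $\mathcal{Z}$ coincides with the LP value over $\Delta^{p}$; and $(ii)$ the equality $\bm c(\bm s)^\top\bm A_{\bm\theta}(\bm s)+\lambda\bm e^\top=0$ is precisely the boundedness certificate for the linear objective over that affine hyperplane, ensuring the dual value $-\lambda$ is attained and equals the binary minimum. Finally, I would observe that the remaining coupling $\bm A_{\bm\theta}(\bm s)\bm z$ is linear in the decisions when $\bm A$ is fixed, and when $\bm A$ itself enters as a decision variable in the training problem, the product is bilinear in a binary and a continuous variable and can be exactly linearized by McCormick envelopes under bounded $\bm A$. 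Together with an epigraph reformulation of the norm, this yields the advertised mixed-integer linear program.
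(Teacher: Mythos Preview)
Your route differs from the paper's, and observation~(ii) conceals a real gap. By instantiating Lemma~\ref{lemma:reformulation} with $H=\bm e^\top$, $\bm h=1$, $\mathcal K=\{0\}$ you have dropped the nonnegativity of $\bm z$ and are dualizing the inner LP over the \emph{entire affine hyperplane} $\{\bm z:\bm e^\top\bm z=1\}$. That LP is unbounded below unless every entry of $\bm c(\bm s)^\top\bm A_{\bm\theta}(\bm s)$ coincides, which is exactly the condition under which $\bm c(\bm s)^\top\bm A_{\bm\theta}(\bm s)+\lambda\bm e^\top=\bm 0$ admits a solution. For a generic fixed $\bm\theta$ your dual system is therefore infeasible, \eqref{reform:bilinear1} has value $+\infty$, and the claimed equivalence with the (finite) predictability loss fails. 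Observation~(i) --- that the binary minimum equals the LP minimum over the standard simplex $\Delta^{p}$ --- is correct, but it does not carry over to the hyperplane: the simplex and the hyperplane differ precisely by the constraints $\bm z\ge\bm 0$ you discarded when choosing $\mathcal K=\{0\}$.

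The paper argues by a sandwich instead. It writes the predictability loss with the \emph{continuous simplex} $\{\bm z\in\mathbb R^p_+:\bm e^\top\bm z=1\}$ (retaining $\bm z\ge\bm 0$), notes that the binary formulation~\eqref{reform:bilinear1} is an upper bound by restriction, and then shows the reverse inequality by taking the continuous optimizer $\bm\gamma^{*}$ and arguing that the inner LP optimum is attained at a simplex vertex --- hence at a binary $\bm z$ --- under the non-degeneracy hypothesis that $\bm c(\bm s)^\top\bm A_{\bm\theta}(\bm s)$ is not parallel to any facet. To salvage your direct-dualization approach you would need to keep $\bm z\ge\bm 0$ in the conic description (so the dual multiplier is a pair in $\mathbb R\times\mathbb R^p_+$, not a single scalar) and then supply the same vertex-optimality argument the paper uses to pass from the continuous simplex back to the binary set.
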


\begin{proof}
The predictability loss with $f_{\bm\theta}(\bm x, \bm s) = \bm c(\bm s)^\top \bm x$ and $g_{\bm\theta}$ given in \eqref{hypothesis} with $\mathcal{Z} = \{\bm z\in\mathbb{R}^p_+\,:\,\bm e^\top \bm z = 1\}$ can be written as
    \begin{equation}\label{reform:bilinear_prop}
     \begin{array}{rl}
     \displaystyle\min  & \|\bm \gamma\|\\
   {\rm s.t.} & \bm \gamma\in\mathbb{R}^n,\, \bm z \in \mathbb{R}_+^p\\
&\bm x+\bm \gamma  = \bm A(\bm s)\bm z+\bm b(\bm s), \quad \bm e^\top \bm z = 1\\
&\bm c(\bm s)^\top(\bm x+\bm\gamma) \leq  \left[
    \begin{array}{cl}
\displaystyle\min  & \bm c(\bm s)^\top\bm y\\
{\rm s.t.}& \bm y\in\mathbb{R}^n,\, \bm z\in\mathbb{R}_+^p\\
&\bm y  = \bm A(\bm s)\bm z+\bm b(\bm s)\\
& \bm e^\top \bm z = 1
    \end{array}
    \right]
    \end{array}
\end{equation}
It is clear that the optimal value of \eqref{reform:bilinear1} constitutes an upper bound on the optimal value of \eqref{reform:bilinear_prop} since $\bm z \in \{0,1\}^p$ is a restriction to $\bm z \in \mathbb{R}_+^p$. Let $\bm \gamma^*$ be an optimal solution of problem~\eqref{reform:bilinear_prop}. We next show that $\bm \gamma^*$ is feasible in \eqref{reform:bilinear1} which will conclude the proof. 

From problem~\eqref{reform:bilinear_prop}, since  $\bm x+\bm \gamma^*$ is a feasible solution in $\bm x+\bm \gamma^*  = \bm A(\bm s)\bm z+\bm b(\bm s),\, \bm e^\top \bm z = 1$, it implies that $\bm x+\bm \gamma^*$ is also feasible in 
\begin{equation}\label{prop:MILPauxiliary}
        \begin{array}{rl}
\displaystyle V_{\bm\theta}(\bm s)=\min  & \bm c(\bm s)^\top\bm y\\
{\rm s.t.}& \bm y\in\mathbb{R}^n,\, \bm z\in\mathbb{R}_+^p\\
&\bm y  = \bm A(\bm s)\bm z+\bm b(\bm s)\\
& \bm e^\top \bm z = 1
    \end{array}
\end{equation}
The last constraint in \eqref{reform:bilinear_prop} also implies that $\bm x+\bm \gamma^*$ is optimal in \eqref{prop:MILPauxiliary}. Additionally, notice that since \eqref{prop:MILPauxiliary} is a linear program and $\bm c(\bm s)^\top\bm A(\bm s)$ is not parallel to any of the facets of the simplex, there exists a unique corner point in the simplex $\bm e^\top \bm z = 1$, i.e.,  $\bm z' \in \{0,1\}^p$, such that $\bm y = \bm A(\bm s)\bm z'+\bm b(\bm s)$   achieves the optimal value. Hence, constraint  $\bm c(\bm s)^\top(\bm x+\bm\gamma) \leq V_{\bm\theta}(\bm s)$ ensures that only $\bm z'$ is feasible, thus   $\bm x+\bm \gamma^*$ is feasible in problem~\eqref{reform:bilinear_prop}, which concludes the proof.
\end{proof}

\section{Smoothing} \label{app:smoothing}

Despite that both the predictability and suboptimality loss are non-convex for the hypothesis class $g_{\bm\theta}$ given in \eqref{hypothesis} with and arbitrary choice of $\mathcal{Z}$, Algorithm 1 tends to be trapped in local minima much more often for the predictability than the suboptimality loss. In this section, we explain this fact by showing that suboptimality loss can be viewed as a smoothed version of predictability loss in the sense of the so-called Nesterov's smoothing. Inspired by this technique, we propose an adaptive smoothing algorithm to potentially escape the local optimum, hence improving the performance.

\begin{definition}[Nesterov’s smoothing \cite{nesterov2018lectures}]\label{def:Nesterov smoothing}
Consider the function in the form of $J(\x) = \max_{\y \in \mathcal{Y}} \langle \bm A\x+\b, \y \rangle - \phi(\y)$ where $\phi$ is a convex function. We define the smooth counterpart of $f_\epsilon$ as
$$J_\epsilon(\x) := \max_{\y \in \mathcal{Y}} \langle \bm A\x+\b, \y \rangle - \phi(\y) - \epsilon d(\y),$$
where $\epsilon >0$ is the smoothing parameter, and $d(\y)$ is called a prox function that (i) is continuous and 1-strongly convex on $\mathcal{Y}$ and (ii) $\min_{\y \in \mathcal{Y}}d(\y)=0$. Then, $J_\epsilon(x)$ is $1/\epsilon$-smooth (i.e., its gradient $\nabla J_\epsilon(\x)$ is $1/\epsilon$-Lipschitz continuous), and  $J_\epsilon(\x) \le J(\x) \le J_\epsilon(\x)+\epsilon/2$ for all $\x \in \mathcal{Y}$. 
\end{definition}

{\bf Suboptimality loss as smoothed predictability loss.}

Considering the optimization problems~\eqref{optimization_problems}, we define~$\bm \beta_i$ as the dual multiplier of the equality constraints~$\bm x_i + \bm \gamma_{i} = \bm A_{\bm\theta}(\bm s_i)\bm z_i + \bm b_{\bm\theta}(\bm s_i)$, and the respective penalization Lagrangian function 
\begin{align}\label{J}
    J(\bm \theta, \bm Z) := \max_{\bm \beta_i} \sum_{i=1}^n \bm \beta_i^\top \big(\bm x_i + \bm \gamma_{i} - \bm A_{\bm\theta}(\bm s_i)\bm z_i- \bm b_{\bm\theta}(\bm s_i)\big) \quad \text{where} \quad \bm Z = \{\gamma_i,z_i\}_{i\le N}\,.
\end{align}
Dualizing this linear constraint in the predictability loss program~\eqref{predictability_problem} reformulates the program to
\begin{align}\label{predictability_problem_J}
\begin{array}{@{}r@{~}l@{}l}
    \min   & \displaystyle\frac{1}{N} \sum_{i = 1}^N \|\bm \gamma_{i}\| + J(\bm \theta, \bm Z)\\
    \textnormal{s.t.}      &\bm A_k\in\mathbb{R}^{n\times p},\, \bm b_k\in\mathbb{R}^{n},\, \forall k\leq K,\\
    &\left.
    \!\!\!\begin{array}{l}
     \bm\gamma_i\in\mathbb{R}^n,\,\bm z_i\in\mathbb{R}^p,\, \bm \lambda_i \in \mathcal K^*\\
     H\bm z_i - \bm h \in \mathcal{K}\\
          \bm c(\bm s_i)^\top\bm A_{\bm\theta}(\bm s_i) - \bm \lambda_i^\top H = 0 \\
     \bm c(\bm s_i)^\top(\bm x_i+\bm\gamma_{i}-\bm b_{\bm\theta}(\bm s_i)) - \bm \lambda_i^\top \bm h \leq 0\\
     \end{array}
    \right\} \forall i\leq N
    \end{array}
\end{align}
Considering the prox function $d(\bm\beta) = {1\over 2}\|\bm \beta + \bm \gamma_i/\epsilon\|^2$, the smoothed version of the function~$J$ is
    \begin{align}\label{J_eps}
        J_{\epsilon}(\bm\theta,\bm Z) =  \sum_{i=1}^N \frac{1}{2\epsilon} \|\bm x_i - \bm A_{\bm\theta}(\bm s_i)\bm z_i- \bm b_{\bm\theta}(\bm s_i)\|^2 - \frac{1}{2\epsilon}\|\bm \gamma_i^2\|. 
    \end{align}
Replacing the above smoothed term in the predictability loss~\eqref{predictability_problem_J} yields the objective function
\begin{align}
    \label{pred_smooth}
    \sum_{i = 1}^N \frac{1}{2\epsilon} \|\bm x_i - \bm A_{\bm\theta}(\bm s_i)\bm z_i- \bm b_{\bm\theta}(\bm s_i)\|^2 + \left(\frac{1}{N} - \frac{1}{2\epsilon}\right)\|\bm \gamma_{i}\|. 
\end{align}
If the smoothing level is $\epsilon = N/2$, and the norm in the objective~\eqref{suboptimality_problem} is separable (i.e., $\|(\gamma_{f,i},\gamma_{o,i})\| = |\gamma_{f,i}|+|\gamma_{o,i}|$), one can then see that the second term in \eqref{pred_smooth} is cancelled and the first term coincides with the optimal solution $\gamma_{f,i}$ in the objective of the suboptimality loss in~\eqref{suboptimality_problem}. In other words, the variable penalizing the feasibility of each data point effectively is a smoothed version of the corresponding term in the predictability loss.

\subsection{Adaptive smoothing}\label{Section_adaptive_smoothing}
We presented the complete version with suboptimality loss in the following.
\begin{definition}
Inspired by the results above , we define smoothed predictability and suboptimality loss in the following. Again, for clarity, we assume $f_{\bm\theta}(\bm x, \bm s) = \bm c(\bm s)^\top \bm x$ and $g_{\bm\theta}$ given in \eqref{hypothesis}. 
The learning problem using the predictability loss can be reformulated as follows
\begin{subequations}\label{smoothed_formulation1}
\begin{equation}
    \begin{array}{rll}
    \min   & \displaystyle\frac{1}{N} \sum_{i = 1}^n \|\bm \gamma_{i}\| + \epsilon_1 \sum_{i=1}^n \|\bgamma_{s1}\| + \epsilon_2 \sum_{i=1}^n \|\bgamma_{s2}\| \\
    {\rm s.t.}      &\bm A_k\in\mathbb{R}^{n\times p},\, \bm b_k\in\mathbb{R}^{n},\, \forall k=1,\ldots,K,\\
    &\left.
    \!\!\!\begin{array}{l}
     \bm\gamma_i\in\mathbb{R}^n,\,\bm z_i\in\mathbb{R}^p,\, \bm \lambda_i \in \mathcal K^*\\
    \bm x_i + \bm \gamma_{i} = \bm A_{\bm\theta}(\bm s_i)\bm z_i + \bm b_{\bm\theta}(\bm s_i) + \bgamma_{s1}\\
     H\bm z_i - \bm h \in \mathcal{K}\\
     \bm c(\bm s_i)^\top(\bm x_i+\bm\gamma_{i}-\bm b_{\bm\theta}(\bm s_i)) - \bm \lambda_i^\top \bm h \leq 0\\
     \bm c(\bm s_i)^\top\bm A_{\bm\theta}(\bm s_i) - \bm \lambda_i^\top H + \bgamma_{s2} = 0 \\
     \end{array}
    \right\} \forall i\leq N\\
    \end{array}
\end{equation}
while the learning problem using the suboptimality loss can be reformulated as follows
\begin{equation}
    \begin{array}{rll}
    \min   & \displaystyle\frac{1}{N} \sum_{i = 1}^n \|(\gamma_{f,i},\gamma_{o,i})\| + \epsilon_1 \sum_{i=1}^n \|\bgamma_{s1}\| \\
    {\rm s.t.}      &\bm A_k\in\mathbb{R}^{n\times p},\, \bm b_k\in\mathbb{R}^{n},\, \forall k=1,\ldots,K,\\
    &\left.
    \!\!\!\begin{array}{l}
    \gamma_{f,i},\gamma_{o,i}\in\mathbb{R}_+\\ \bm\gamma_i\in\mathbb{R}^n,\,\bm z_i\in\mathbb{R}^p,\, \bm \lambda_i \in \mathcal K^*\\
    \bm x_i + \bm \gamma_{i} = \bm A_{\bm\theta}(\bm s_i)\bm z_i + \bm b_{\bm\theta}(\bm s_i) \\
    \|\bm \gamma_i\|\leq \gamma_{f,i}\\
     H\bm z_i - \bm h \in \mathcal{K}\\
     \bm c(\bm s_i)^\top(\bm x_i-\bm b_{\bm\theta}(\bm s_i)) - \bm \lambda_i^\top \bm h \leq \gamma_{o,i}\\
     \bm c(\bm s_i)^\top\bm A_{\bm\theta}(\bm s_i) - \bm \lambda_i^\top H + \bgamma_{s1} = 0 \\
     \end{array}
    \right\} \forall i\leq N\\
    \end{array}
\end{equation}
\end{subequations}
\end{definition}

\subsection{Convergence behavior of adaptive smoothing} \label{app:smooth:exp}

In this section, we study the convergence behavior of Algorithm \ref{Alg:smooth} using the experiment setting described in Section \ref{sec:pro:bi}. We plot the values of training loss and the four metrics defined in \ref{sec:perf} to validate that the proposed method improves the performance of the solution. The plots are shown in Figure \ref{fig:train:smooth}, where the training loss increases as more regularization is added to the optimization. However, the true losses keep decreasing implying better optimal solutions are being found each time.

Finally, it is worth noting that utilizing the algorithm without smoothing leads to inconsistent outcomes, particularly concerning predictability losses. Conversely, employing an adaptive smoothing algorithm here ensures consistent results are obtained. We record this behavior via an example in Figure \ref{fig:comp_smooth_reg}. The experiment settings follow the descriptions introduced in Appendix \ref{sec:pro:bi}, where $p = 5$.

\begin{figure}[htbp] 
\centering
\begin{minipage}[t]{0.44\textwidth} 
  \centering
  \includegraphics[width=1.1\linewidth]{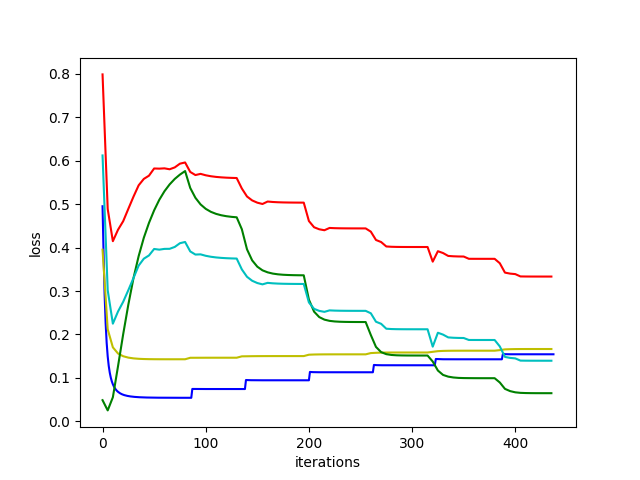}
  \caption{Training loss and test metrics. Blue: Training loss (predictability). Red: Out-of-sample predictability loss. Cyan: True predictability loss. Green: True suboptimality loss. Yellow: Out-of-sample suboptimality loss.}
  \label{fig:train:smooth}
\end{minipage}%
\hspace{0.04\textwidth} 
\begin{minipage}[t]{0.44\textwidth} 
  \centering
  \includegraphics[width=1.1\linewidth]{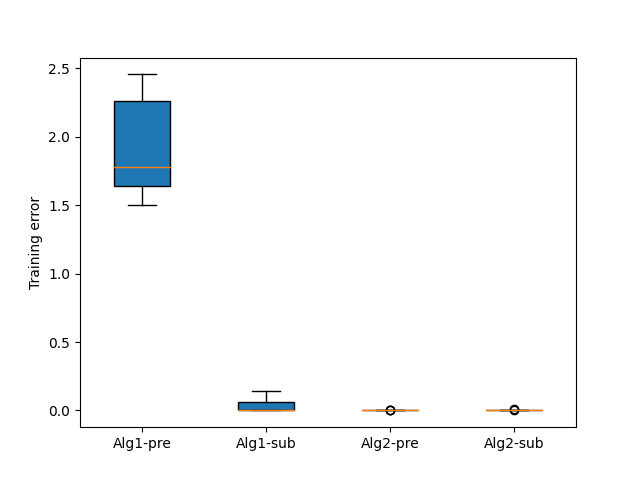}
  \caption{Comparison between convergence behaviors of predictability and suboptimality losses via both Algorithm \ref{alg:vanilla} and \ref{Alg:smooth}. The y-axis represents the achieved training loss. Each box plot is based on the same $10$ randomly generated datasets.}
  \label{fig:comp_smooth_reg}
\end{minipage}
\end{figure}

\section{Problem 1: IEEE 14-Bus System}\label{app:ieee14}

We also apply our methods on IEEE 14-bus system. This power system consists of a set of regions $ \mathcal R = \{1, \cdots , 14\}$ with electricity demands $s^{\text{demand}}_r$, $r \in R$. Demands are satisfied by a set three power plans attached to node 2, node 8, and node 13 respectively. Each plant $n \in \mathcal N$ produces $x_n$ units of energy at costs $s^{\text{cost}}_n$.

\begin{figure}[h!]
    \centering
    \includegraphics[width = 0.6\linewidth]{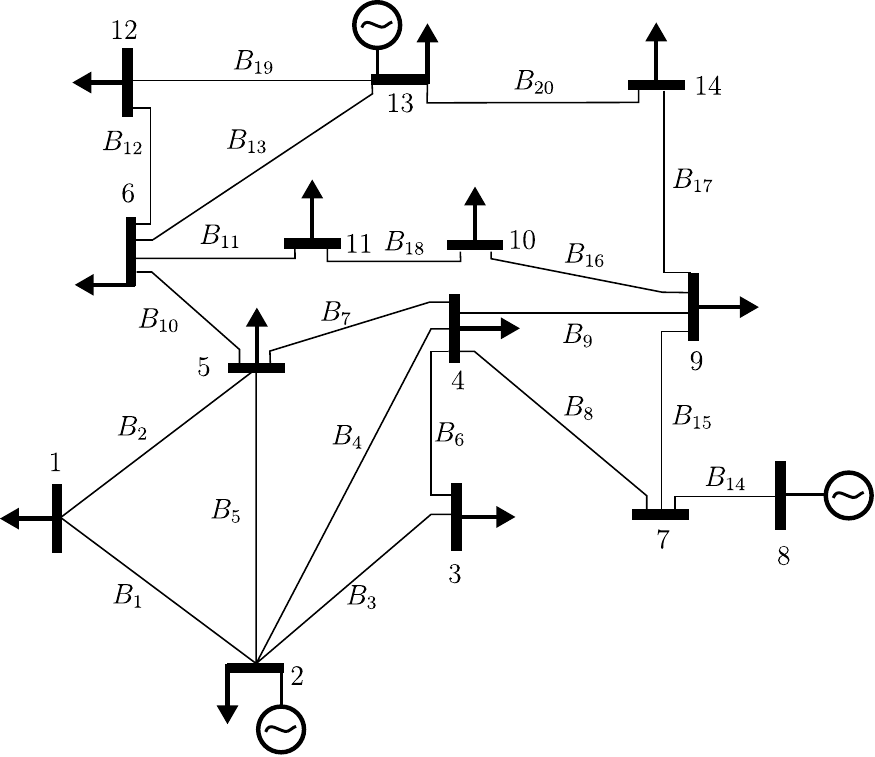}
    \caption{IEEE 14-bus system \cite{leon2020quadratically}. Rectangles denote nodes, arrows denote loads, and circles denotes  generators. }
    \label{fig:ieee-14}
\end{figure}

 We set $C_n = 3.6$ for all generators and $\bar{f}_m = 3$ for all transmission lines.  We generate $N_{\text{train}} = 100$ data points for training and $N_{\text{test}} = 200$ data points for testing, by generating signals uniformly at random from $s^{\text{cost}}_{13}\in[0.2, 1]$, $s^{\text{cost}}_2\in [0.2, 0.5]$,  $s^{\text{cost}}_8\in[1, 2]$, and $s^{\text{demand}}_1\in[0.14, 0.7],\,s^{\text{demand}}_2\in[0.14,0.7],\,s^{\text{demand}}_3\in[0.16, 0.8],\,s^{\text{demand}}_4\in [0.16, 0.8]$, $s^{\text{demand}}_5\in [0.14, 0.7]$, $s^{\text{demand}}_6\in [0.1, 0.5]$, $s^{\text{demand}}_7\in [0.16, 0.8]$, $s^{\text{demand}}_8\in [0.54, 2.7]$, $s^{\text{demand}}_9\in [0.1, 0.2]$, $s^{\text{demand}}_{10}\in [0.12, 0.6]$, $s^{\text{demand}}_{11}\in [0.12, 0.6]$, $s^{\text{demand}}_{12}\in [0.1, 0.5]$, $s^{\text{demand}}_{13}\in [0.1, 0.5]$, and $s^{\text{demand}}_{14}\in [0.12, 0.6]$, solving problem~\eqref{app:model} to obtain pairs $\{\s_i,\x_i\}_{i = 1}^{N=100}$. 

We apply the same method introduced in Section \ref{sec:power_app1} and summarize the results in Table \ref{tab:ieee14}.
\begin{table}[h!]
\caption{Summary of out-of-sample performances on IEEE 14-bus system}\label{tab:ieee14}
\begin{center}
\small{
\begin{tabular}{|c|c|c|c|c|c|}
\multicolumn{2}{c}{Hypothesis Class and Loss}& \multicolumn{3}{c}{Evaluation method}\\
    \hline
    & loss & $\ell_\theta^p /\ell^p$ &  $\ell_\theta^\text{sub}$ &  $\ell^\text{sub}$ & time \\
    \hline
    \multirow{1}{*}{Algo~\ref{Alg:smooth},\,$p=1$}&  predictability  &  $0.83$ & $0.68$ & $0.12$ & $3.5$s\\

    \hline
    \multirow{1}{*}{Algo~\ref{Alg:smooth},\,$p=2$}& predictability &  $0.43$ & $0.06$ & $0.20$ & $148.8$s \\

    \hline
    \multirow{1}{*}{Algo~\ref{Alg:smooth},\,$p=3$}& predictability &  $0.51$ & $0.06$ & $0.24$ & $157.8$s \\
    
        \hline
    \multirow{1}{*}{quadratic cost}& predictability &  $2.58$  & $2.56$ & $0.63$ & $2.54$s \\
    \hline
\end{tabular}}
\end{center}
\vskip -0.1 in
\end{table}

\section{Reformulation of Problem 2} \label{app:flow}
Recall that the primitive set is chosen as a box, i.e., $\mathcal{Z} = \{\bm{z} \in \R^{M} |  -\bar{f}_m \leq z_m \leq \bar{f}_m, \forall m \in M\}$, where $M = \{1,\cdots,|R|=5\}$ ($|R|$ is the total number of the demand locations). For the hypothesis class, we use $\bm A_{\bm \theta} \in \{0,1\}^{5 \times 10}$ and $\bm b_{\bm \theta} = \bm \delta$. Matrix $A_{\bm \theta}$ are  binary matrices indicating the connections of power networks ($C_5^2 = 10$ power lines at most).
We use $\bm G \bm x -\bm g \in \mathcal{G}$ to denote the capacity constraints of the power plants. For clarity, we denote the power generation plan and costs by two $5$-dimensional vectors $\x$ and $\c$. Each dimension corresponds to one node in the graph. For nodes that have no power plant, we simply set its value to zero. Then, the forward problem can be written as:
\begin{equation}
    \begin{aligned}
        \min_{\x} \quad &\c^\top\x \\
        s.t. \quad & \x = \bm A_{\bm \theta} \z + \s^{\text{demand}},\\
        & \bm H \z - \bm h \in \mathcal{K}, \quad (\textbf{equivalently } \mathcal{Z} = \{\bm{z} \in \R^{M} |  -\bar{f}_m \leq z_m \leq \bar{f}_m, \forall m \in M\})\\
        & \bm G \x - \bm g \in \mathcal{G}, \quad (\textbf{equivalently } 0 \leq \x \leq 3.5).
    \end{aligned}
\end{equation}

Then, the predictability loss can be formulated as a binary problem as shown in (\ref{refor:flow}).
\begin{equation}\label{refor:flow}
    \begin{aligned}
    \min_{\bgamma_{i}, \bm z_i, \bm A_{\bm \theta}, \bm \beta_i, \blambda_i} \quad & \frac{1}{n} \sum_{i=1}^N \|\bgamma_{i}\|_2^2\\
    s.t. \quad & 
    \bm x_i + \bgamma_{i} = \bm A_{\bm \theta} \z_i + \s^{\text{demand}}_i , \forall i,\\
    & \bm H \z_i - \bm h \in \mathcal{K}, \forall i,\\
    & \bm G (\x_i+\bgamma_i) - \bm g \in \mathcal{G}, \forall i,\\
    & \c^\top_i(\bm s_i)(\x_i+\bgamma_{i}) - \c^\top(\s_i)\s_i^{\text{demand}} - \blambda_i^\top \bm h +\bm \beta_i^\top \bm g - \bm \beta_i^\top\bm G \bm \s_i^{\text{demand}} \leq 0,\forall i,\\
    & \blambda_i \in \K^*, \bm \beta_i \in \mathcal{G}^*, \forall i,\\
    & \c^\top_i \bm A_{\bm \theta} - \blambda_i^\top \bm H - \bm \beta_i^\top \bm G \bm A_{\bm \theta} = 0, \forall i,\\
    & \bm A_{\bm \theta}  \in \{0,1\}^{5\times {10}}.
    \end{aligned}
\end{equation}

\section{Further Numerical Experiments}\label{sec:comp}
In this section, we conduct experiments on synthetic optimization problems to validate the proposed reformulations under both noiseless and noisy scenarios. We consider the following forward problem.
\begin{equation}\label{synthe:model1}
    \begin{aligned}
    \min_{\bm x} \quad & \bm c^\top \bm x, \quad {\rm s.t.} \quad \|\bm x - \bm e\|_1 \leq h.
    \end{aligned}
\end{equation}
We assume the objective coefficients $\bm c$ are input signals, which follow uniform distributions. The goal is to recover the feasible region based on the observed values of $\bm c$ and $\x$. {\color{black}Notice that choices of distance metrics of the objective functions for both predictability and suboptimality loss do not affect any of the reformulations or algorithms proposed. Throughout the subsequent experiments, square loss will be employed for the sake of simplicity.} 

\subsection{Convex Reformulations}

In this part, we assume $c_i \sim U[-1,1]$, $1 \leq i \leq n = 2$, and we set tje primitive set: $\displaystyle \Z = \{\z \in \mathbb{R}^2 \big| \|\z\|_1 \leq 1\}$ in the hypothesis class.
For the training data, we randomly generate $N$ cost coefficients and calculate the corresponding optimal solutions of the forward problem. Experiments under noise or no noise are both conducted. If the training set contains noise, we add a Gaussian noise $N(0,0.2)$ to each of its dimensions. The performance is evaluated based on a test set that also contains $500$ randomly generated coefficients and solutions. 

When there is no noise, we achieved zero loss for all metrics. The performance summary for the noisy case is summarized in Table~\ref{tab:multicol_convex}. It can be seen that the performance for predictability loss improves with the increasing number of training data under i.i.d. noises. This phenomenon arises due to the statistically consistent nature of estimates generated by predictability loss, as also highlighted in \cite{aswani2018inverse}.

\begin{table}[h!]
\caption{Out-of-sample Performance of Convex formulation with Gaussian noise.}\label{tab:multicol_convex}
\begin{center}
\small{
\begin{tabular}{|c|c|c|c|c|c|}
    \hline
    & loss &$\ell_\theta^p$ & $\ell^p$ &  $\ell_\theta^\text{sub}$ &  $\ell^\text{sub}$ \\
    \hline
    \multirow{2}{*}{$N=100$}& predictability &  $0.54$ & $0.03$ & $0.29$ & $0.05$ \\
       \cline{2-6}
       & suboptimality & $0.66$ & $0.14$ & $0.35$ & $0.28$\\
    \hline
    \hline
    \multirow{2}{*}{$N=500$}& predictability &  $0.52$ & $1.7e^{-3}$ & $0.31$ & $3.3e^{-3}$ \\
       \cline{2-6}
       & suboptimality & $0.61$ & $0.09$ & $0.34$ & $0.17$\\
    \hline
    \hline
    \multirow{2}{*}{$N=1000$}& predictability &  $0.51$ & $0$ & $0.32$ & $0$ \\
    \cline{2-6}
       & suboptimality & $0.59$ & $0.07$ & $0.34$ & $0.13$\\
    \hline
\end{tabular}}
\end{center}
\vskip -0.1in
\end{table}

\subsection{Bilinear Reformulations with Simplex Primitive Set}\label{sec:pro:bi}

In this section, we showcase the performance of our bilinear reformulation, where the primitive set is defined in a high-dimensional space. More specifically, we assume , i.e, $c_i \sim U[0,1]$, $1 \leq i \leq n = 5$, and we consider the following simplex $\Z = \big\{\z \geqslant \bm 0 ~\big|~ \sum_{i=1}^p z_i = 1 \big\}$ in a $p$-dimensional space as the primitive set. For this  bilinear reformulation, we can both apply Algorithm \ref{Alg:smooth} or MILP reformulation to find the optimal solutions.

\textbf{Experiment setting:} We randomly generate $N$ cost coefficients and calculate the corresponding optimal solutions. We consider both predictability loss and suboptimality loss. We use backtracking to adjust the step length according to the Armijo rule.

We first apply Algorithm \ref{Alg:smooth} to optimize the predictability and suboptimality loss. The optimization is stopped after $500$ iterations.
We record the performance of the proposed two losses under noiseless and noisy scenarios. For noisy cases, a Gaussian noise $N(0,0.2)$ is added to each of the dimensions of the original solutions. We also vary the number of observed samples under noisy cases to check the changes in performances. In noiseless cases, we also apply MILP reformulation to obtain the optimal solutions.

\begin{table}[h!]
\caption{Noiseless}\label{tab:multicol:bilinear:noiseless}
\begin{center}
\small{
\begin{tabular}{|c|c|c|c|c|c|c|c|}
    \hline
   n=5 & N=100 & train &$\ell_\theta^p$ & $\ell^p$ &  $\ell_\theta^\text{sub}$ &  $\ell^\text{sub}$& time \\
    \hline
    \multirow{3}{*}{$p=4$}& Predictability & 0.32 & $1.04$ & $1.04$ & $0.44$ & $0.10$ & 174.6s\\
       \cline{2-8}
       & Suboptimality & 0.33 & $1.04$ & $1.04$ & $0.44$ & $0.10$ & $48.3$s\\
       \cline{2-8}
       & MILP & 0.25 & $0.99$ & $0.99$ & $0.45$ & $0.07$ & $901.2$s\\
    \hline
    \hline
    \multirow{3}{*}{$p=5$}& Predictability & 0 &  $0$ & $0$ & $0$ & $0$ & $49$s\\
       \cline{2-8}
       & Suboptimality & 0 & $0$ & $0$ & $0$ & $0$ & $52.3$s\\
       \cline{2-8}
       & MILP & 0 &  $0$ & $0$ & $0$ & $0$ & $902.7$s\\
    \hline
    \hline
    \multirow{3}{*}{$p=6$}& Predictability & 0&  $0$ & $0$ & $0$ & $0$ & $21.6$s\\
    \cline{2-8}
       &Suboptimality & 0& $0$ & $0$ & $0$ & $0$ & $42.3$s\\
       \cline{2-8}
       &MILP & 0&  $0$ & $0$ & $0$ & $0$ & $543.5$s\\
    \hline
\end{tabular}}
\end{center}
\vskip -0.1in
\end{table}

\begin{table}[h!]
\caption{Noisy case with $100$ training samples.}\label{tab:multicol:bilinear:noise100}
\small{
\begin{center}
\begin{tabular}{|c|c|c|c|c|c|c|c|}
    \hline
   $n=5$ & $N=100$ & train &$\ell_\theta^p$ & $\ell^p$ &  $\ell_\theta^\text{sub}$ &  $\ell^\text{sub}$ & time \\
    \hline
    \multirow{3}{*}{$p=4$}& Predictability & $0.52$ & $1.17$ & $0.97$ & $0.60$ & $0.10$ & $87.4$s\\
       \cline{2-8}
       & Suboptimality & $0.43$ & $1.42$ & $1.22$ & $0.57$ & $0.91$ & $85.0$s\\
    \hline
    \hline
    \multirow{3}{*}{$p=5$}& Predictability & $0.14$ &  $0.42$ & $0.24$ & $0.18$ & $0.11$ & $110.9$s\\
       \cline{2-8}
       & Suboptimality & $0.11$ & $0.68$ & $0.49$ & $0.14$ & $0.84$ & $101.2$s\\
    \hline
    \hline
    \multirow{3}{*}{$p=6$}& Predictability & $0.13$ &  $0.50$ & $0.31$ & $0.19$ & $0.07$ & $131.0$s\\
    \cline{2-8}
       &Suboptimality & $0.11$ & $0.65$ & $0.46$ & $0.14$ & $0.72$ & $121.9$s\\
    \hline
\end{tabular}
\end{center}}
\vskip -0.1in
\end{table}

\begin{table}[h!]
\caption{Noisy case with $200$ training samples.}\label{tab:multicol:bilinear:noise200}
\begin{center}
\small{
\begin{tabular}{|c|c|c|c|c|c|c|c|}
    \hline
   $n=5$ & $N=200$ & train &$\ell_\theta^p$ & $\ell^p$ &  $\ell_\theta^\text{sub}$ &  $\ell^\text{sub}$ & time \\
    \hline
    \multirow{3}{*}{$p=4$}& Predictability & $0.52$ & $1.17$ & $0.97$ & $0.58$ & $0.10$ & $170.4$s \\
       \cline{2-8}
       & Suboptimality & $0.44$ & $1.46$ & $1.26$ & $0.56$ & $0.97$ & $208.3s$\\
    \hline
    \hline
    \multirow{3}{*}{$p=5$}& Predictability & $0.16$ &  $0.37$ & $0.18$ & $0.17$ & $0.05$ & $259.2$s \\
       \cline{2-8}
       & Suboptimality & $0.11$ & $0.78$ & $0.58$ & $0.14$ & $0.89$ & $222.4$s\\
    \hline
    \hline
    \multirow{3}{*}{$p=6$}& Predictability & $0.14$ &  $0.37$ & $0.19$ & $0.16$ & $0.10$ & $257.2$s \\
    \cline{2-8}
       &Suboptimality & $0.11$ & $0.74$ & $0.54$ & $0.13$ & $0.84$ & $236.2$s\\
    \hline
\end{tabular}}
\end{center}
\vskip -0.1in
\end{table}

\textbf{Results.} The results are summarized in Table \ref{tab:multicol:bilinear:noiseless}, \ref{tab:multicol:bilinear:noise100}, and \ref{tab:multicol:bilinear:noise200}. In scenarios without noise, the forward problem is accurately restored when the hypothesis class encompasses the true feasible region. In such instances, this requirement is equivalent to having $p \geq 5$ since the original feasible region comprises five extreme points. In noisy scenarios, we noticed enhanced performance with an enlarged sample size for predictability loss, with no corresponding improvement observed for suboptimality loss. It was also noted that the MILP reformulation exhibited superior overall performance in contrast to the gradient-descent-based algorithm. Nonetheless, in instances characterized by noise, the inherent complexity of MILP formulations renders them challenging (number of binary variables proportional to data size) to resolve within a one-hour timeframe. Finally, a visual representation of feasible region  $g_{\theta^t}(\boldsymbol x, \boldsymbol s)\leq 0$ as the parameters $\theta^t$ are updated using  Algorithm~\ref{Alg:smooth} is presented in Figure~\ref{fig:enter-label} for the noiseless case. We observe that the hypothesis class is able to rotate, scale, project and translate the primitive set by updating $\theta^t = (A^t,b^t)$.  
\begin{figure}[h!]
    \centering
    \includegraphics[width=0.6\linewidth]{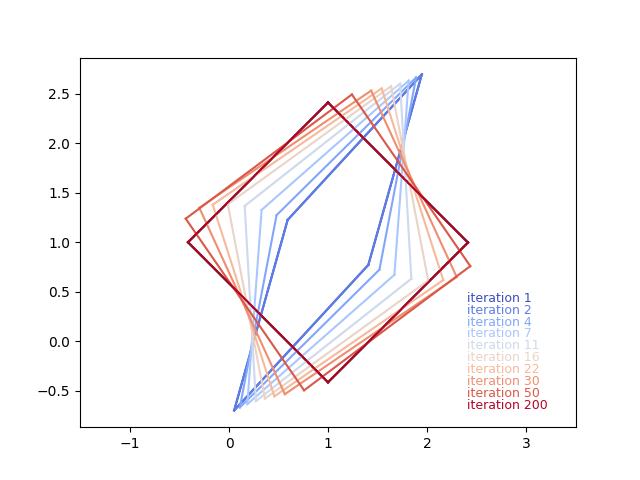}
    \caption{Visualization of $g_{\theta^t}(\boldsymbol x, \boldsymbol s)\leq 0$ for Algorithm~\ref{Alg:smooth}. The feasible region of the forward problem is given by $\|\boldsymbol x - \boldsymbol e\|\leq 1$, and the learning algorithm recovers it by iteration 200.}
    \label{fig:enter-label}
\end{figure}

\end{document}